\DeclareFontShape{T1}{lmr}{bx}{sc} { <-> ssub * cmr/bx/sc }{}
\pgfplotsset{compat=newest}
\numberwithin{equation}{section}
\setlist[enumerate]{label=(\roman*)}
\theoremstyle{plain}
\newtheorem{theorem}{Theorem}[section]
\newtheorem{lemma}[theorem]{Lemma}
\Crefname{lemma}{Lemma}{Lemmas}
\newtheorem{remark}[theorem]{Remark}
\newtheorem{definition}[theorem]{Definition}
\newtheorem{example}[theorem]{Example}
\newtheorem{problem}[theorem]{Problem}
\newcommand{\N}{\ensuremath\mathbb{N}}
\newcommand{\R}{\ensuremath\mathbb{R}}
\newcommand{\GL}[2]{\mathrm{GL}_{#1}(#2)}
\newcommand{\stiefelM}[2]{\mathrm{St}(#1,#2)}
\newcommand{\Spd}[1]{\mathcal{S}^{#1}_{\succ}}
\newcommand{\Spsd}[1]{\mathcal{S}^{#1}_{\succeq}}
\newenvironment{smallbmatrix}{\left[\begin{smallmatrix}}{\end{smallmatrix}\right]}
\newcommand{\T}{\ensuremath\mathsf{T}}
\newcommand{\tddt}{\ensuremath{\tfrac{\mathrm{d}}{\mathrm{d}t}}}
\DeclareMathOperator{\diag}{diag}
\DeclareMathOperator{\rank}{rank}
\DeclareMathOperator{\Skew}{skew}
\DeclareMathOperator{\Sym}{sym}
\DeclareMathOperator{\trace}{trace}
\newcommand{\calH}{\mathcal{H}}
\newcommand{\calJ}{\mathcal{J}}
\newcommand{\calP}{\mathcal{P}}
\newcommand{\calR}{\mathcal{R}}
\definecolor{mycolor1}{rgb}{0.00000,0.44700,0.74100}
\definecolor{mycolor2}{rgb}{0.85000,0.32500,0.09800}
\definecolor{mycolor3}{rgb}{0.92900,0.69400,0.12500}
\definecolor{mycolor4}{rgb}{0.46600,0.67400,0.18800}
\definecolor{mycolor5}{rgb}{0.49400,0.18400,0.55600}
\newcommand{\reduce}[1]{\widetilde{#1}}
\newcommand{\state}{x}
\newcommand{\stateVar}{\state}
\newcommand{\stateDim}{n}
\newcommand{\stateDimRed}{\tilde{r}}
\newcommand{\stateDimGen}{\tilde{\stateDim}}
\newcommand{\dmdState}{\widetilde{\state}}
\newcommand{\prefix}{\Delta}
\newcommand{\inpVar}{u}
\newcommand{\inpVarDim}{m}
\newcommand{\outVar}{y}
\newcommand{\outVarDim}{p}
\newcommand{\dmdOutVar}{\widetilde{\outVar}}
\newcommand{\timeStep}{\delta_t}
\newcommand{\nrSnapshots}{M}
\newcommand{\nrSnapshotsGen}{\tilde{\nrSnapshots}}
\newcommand{\dmdM}{\mathcal{A}}
\newcommand{\dmdA}{\widetilde{A}}
\newcommand{\dmdB}{\widetilde{B}}
\newcommand{\dmdC}{\widetilde{C}}
\newcommand{\dmdD}{\widetilde{D}}
\newcommand{\dmdJ}{\widetilde{J}}
\newcommand{\dmdR}{\widetilde{R}}
\newcommand{\dmdJJ}{\widetilde{\mathcal{J}}}
\newcommand{\dmdRR}{\widetilde{\mathcal{R}}}
\newcommand{\dmdY}{\hat{Y}}
\newcommand{\dmdU}{\hat{U}}
\newcommand{\dmdV}{\hat{X}}
\newcommand{\dmdW}{\dot{\hat{X}}}
\newcommand{\podV}{\Phi}
\newcommand{\svdL}{V}
\newcommand{\svdR}{W}
\newcommand{\dataZ}{\mathcal{Z}}
\newcommand{\dataT}{\mathcal{T}}
\newcommand{\pseudo}[1]{#1^\dagger}
\newcommand{\hamiltonian}{\mathcal{H}}
\newcommand{\pHH}{\widehat{H}}
\newcommand{\pHA}{\widehat{A}}
\newcommand{\pHJ}{\widehat{J}}
\newcommand{\pHR}{\widehat{R}}
\newcommand{\pHG}{\widehat{G}}
\newcommand{\pHP}{\widehat{P}}
\newcommand{\pHD}{\widehat{D}}
\newcommand{\pHB}{\widehat{B}}
\newcommand{\pHC}{\widehat{C}}
\newcommand{\pHDsym}{\widehat{S}}
\newcommand{\pHDskew}{\widehat{N}}
\newcommand{\pHL}{\widehat{\mathcal{J}}} 
\newcommand{\pHW}{\widehat{\mathcal{R}}} 
\newcommand{\procA}{T}
\newcommand{\procB}{Z}
\newcommand{\skewProcB}{\dataZ_1}
\newcommand{\spsdProcB}{\dataZ_2}
\newcommand{\Htwo}{\calH_2}
\newcommand{\Hinf}{\calH_\infty}
\newcommand{\Ltwo}{L^2}
\newcommand{\Linf}{L^\infty}
\newcommand{\projPSD}{\calP_\succeq}
\newcommand{\norm}[1]{\left\|#1\right\|}
\newcommand{\frobnorm}[1]{\left\|#1\right\|_\mathrm{F}}
\newcommand{\bmat}[1]{\begin{bmatrix}#1\end{bmatrix}}
\newcommand{\pH}{\textsf{pH}\xspace}
\newcommand{\DMD}{\textsf{DMD}\xspace}
\newcommand{\OI}{\textsf{OI}\xspace}
\newcommand{\POD}{\textsf{POD}\xspace}
\newcommand{\SVD}{\textsf{SVD}\xspace}
\newcommand{\FGM}{\textsf{FGM}\xspace}
\newcommand{\LTI}{\textsf{LTI}\xspace}
\newcommand{\pHDMD}{\textsf{pHDMD}\xspace}
\title{Port-Hamiltonian Dynamic Mode Decomposition}
\author{Riccardo Morandin${}^\dagger$ \and Jonas Nicodemus${}^\star$ \and Benjamin Unger${}^\star$}
\address{${}^{\dagger}$  Institute of Mathematics MA\,{}4-5, Technical University Berlin, Stra\ss e des 17.~Juni 136, 10623 Berlin, Germany}
\email{morandin@math.tu-berlin.de}
\address{${}^{\star}$ Stuttgart Center for Simulation Science (SC SimTech), University of Stuttgart, Universit\"{a}tsstr.~32, 70569 Stuttgart, Germany}
\email{\{jonas.nicodemus,benjamin.unger\}@simtech.uni-stuttgart.de}
\date{\today}
\begin{document}

\begin{abstract}
	We present a novel physics-informed system identification method to construct a passive linear time-invariant system. In more detail, for a given quadratic energy functional, measurements of the input, state, and output of a system in the time domain, we find a realization that approximates the data well while guaranteeing that the energy functional satisfies a dissipation inequality. To this end, we use the framework of port-Hamiltonian (pH) systems and modify the dynamic mode decomposition, respectively operator inference, to be feasible for continuous-time pH systems. We propose an iterative numerical method to solve the corresponding least-squares minimization problem. We construct an effective initialization of the algorithm by studying the least-squares problem in a weighted norm, for which we present the analytical minimum-norm solution. The efficiency of the proposed method is demonstrated with several numerical examples.
\end{abstract}

\maketitle
{\footnotesize \textsc{Keywords:} dynamic mode decomposition, port-Hamiltonian systems, system identification, dissipation inequality, passivity, knowledge-driven realization}

{\footnotesize \textsc{AMS subject classification:} 37J06,37M99,65P10,93A30,93B30,93C05}
%
%


\section{Introduction}
\label{sec:intro}

Incorporating prior knowledge into modern learning architectures becomes increasingly important in several applications. Such knowledge-driven or physics-informed approaches \cite{KarKLPWY21,RaiPK19} exploit expert knowledge during the learning process, either by optimizing only over a suitable set of candidate functions, or by using physical information in the cost functional. In our work, we deal with data from physical systems with the goal of identifying a linear dynamical system, which we refer to as a realization that approximates the data as well as possible with respect to the Frobenius norm. In more detail, assume that we have measurements $(\inpVar(t_i),\state(t_i),\outVar(t_i))\in\R^{\inpVarDim}\times\R^{\stateDim}\times\R^{\inpVarDim}$ at time instances~$t_i$ for $i=0,\ldots,\nrSnapshots$. Then, we want to determine matrices $A\in\R^{\stateDim\times\stateDim}$, $B\in\R^{\stateDim\times\inpVarDim}$, $C\in\R^{\inpVarDim\times\stateDim}$, and $D\in\R^{\inpVarDim\times\inpVarDim}$ such that the data can be approximately recovered by the linear time-invariant system
\begin{equation}
	\label{eqn:LTI}
	\begin{aligned}
		\dot{\state} &= A\state + B\inpVar,\\
		\outVar &= C\state + D\inpVar.
	\end{aligned}
\end{equation}
Since we assume that the data is based on a physical process, we want to ensure that the realization satisfies a dissipation inequality, i.e., that the rate of change of the energy associated with the system is bounded by the externally supplied energy. We thus incorporate physical knowledge by prescribing the (quadratic) energy functional
\begin{equation}
	\label{eqn:Hamiltonian}
	\hamiltonian(\state) \vcentcolon= \tfrac{1}{2} \state^\T H \state
\end{equation}
with symmetric positive definite matrix $H\in\R^{\stateDim\times\stateDim}$. Our main goal is then to determine the matrices in~\eqref{eqn:LTI} such that any solution of~\eqref{eqn:LTI} satisfies the dissipation inequality
\begin{equation}
	\label{eqn:dissipationInequality}
	\tddt \hamiltonian(\state(t)) \leq \outVar(t)^\T \inpVar(t)
\end{equation}
for any $t$. One of the main advantages of requiring the learned model to satisfy a dissipation inequality is that whenever the model is coupled with another passive model via a 
power-conserving or dissipative interconnection, then the coupled model is also passive. Moreover, since the Hamiltonian also serves as a Lyapunov function, we are guaranteed that the identified system is stable (independent of the underlying physical system and the quality of the measurements). Our framework can thus be used to guarantee the physical behavior of coupled first-principle and purely data-inferred dynamical systems. To achieve this goal, we use the framework of \emph{port-Hamiltonian} (\pH) systems~\cite{JacZ12,SchJ14} and modify the \emph{dynamic mode decomposition} (\DMD) \cite{Sch10,TuRLBK14,KutBBP16} and \emph{operator inference} (\OI) \cite{PehW16} accordingly. Our main contributions are the following:
\begin{enumerate}
	\item Since \DMD is designed to compute a discrete-time dynamical system, we follow~\cite{KotL18, MehM19} and present a definition of a discrete-time \pH system in \cref{subsec:pH}, which is motivated from the structure-preserving time-discretization of a continuous-time \pH system. The corresponding modified \DMD optimization problem is formulated in \Cref{problem:pHDMD}.
	\item Although \Cref{problem:pHDMD} is convex and solvable, a closed-form solution formula is not immediately available. Instead, we propose an iterative method (\Cref{alg:pHDMDIteration}) combining the result of a skew-symmetric Procrustes problem \cite{DenHZ03} with a projected fast gradient method for a positive semidefinite Procrustes problem \cite{GilS18a}. 
	\item For an efficient initialization of \Cref{alg:pHDMDIteration}, we consider a weighted Frobenius norm, where we weight the problem according to the relevant information in the data, see the forthcoming \Cref{subsec:pHDMD:modifiedProblem}. The analytic minimum-norm solution of the weighted problem is then presented in \Cref{thm:projectedApproach} and used, up to some modification, as initialization for our iterative method.
\end{enumerate}

\subsection{Literature review}
The construction of a realization of the form~\eqref{eqn:LTI} from data is a well-studied subject with many popular approaches. We mention the eigensystem realization algorithm \cite{JuaP85}, the Loewner framework \cite{MayA07}, vector fitting \cite{GusS99}, neural ordinary differential equations \cite{CheRB19}, \OI \cite{PehW16}, and \DMD \cite{KutBBP16}. Introducing expert knowledge to these approaches is not a novel idea and may even be the key idea to quantifying the error between the realization and the true physical system, cf.~\cite{HilU22,HilU22b}. Exploiting expert knowledge in the identification of linear systems is reported in various applications and methods. For instance, the specific structure of mechanical systems is exploited in a vector fitting framework in \cite{WerGG21}. The authors of \cite{SchU16,SchUBG18} exploit the fact that certain wave-type phenomena can be represented with delay equations to construct accurate linear time-invariant surrogate models from data. In the context of the Navier-Stokes equation, the specific structure of the (semi-discretized) equations is exploited in an \OI approach in \cite{BenGHP20}. Extensions of \OI to Hamiltonian and Lagrangian dynamics are reported in \cite{ShaWK22,ShaK22}. Similar ideas are discussed in the context of \DMD in \cite{BadHMKB21} by restricting the discrete-time iteration matrix to specific manifolds. In essence, our method relies on a similar idea but with a specific structure of the iteration matrix not discussed in \cite{BadHMKB21}.

As detailed above, we use the framework of \pH systems to achieve our goal of identifying a system that satisfies a dissipation inequality. In contrast to our approach, most existing results on learning linear time-invariant \pH systems work in the frequency domain. Using rational interpolation, a \pH system is constructed in \cite{AntLI17} within the Loewner framework by interpolating the transfer function at the spectral zeros. Since these are typically not known a priori, the authors of \cite{BenGV20} propose first identifying an (unstructured) system and then computing the spectral zeros from this system. A parameterization of \pH systems is used in \cite{SchV20,Sch21,SchV21} to approximately mimic an $\mathcal{H}_\infty$-type cost functional. Note that these ideas can be transferred to time-domain data using similar ideas as, for instance, in \cite{PehGW17}. Instead of directly identifying a \pH structure (as we do in our contribution), the authors of \cite{CheMH19-ppt} propose to first identify an unstructured model and then find the nearest \pH systems. Let us emphasize that in contrast to the other methods, we assume knowledge of the energy functional, and instead of finding an arbitrary \pH system, our goal is to identify a system such that the dissipation inequality for this energy functional is guaranteed.

We mention that many of the identification methods discussed above are closely related to intrusive structure-preserving model order reduction methods for \pH systems. Popular approaches include the effort-and flow-constraint reduction method \cite{PolS12}, tangential interpolation \cite{WolLEK10,PolS11,GugPBV12}, generalized balancing \cite{BreMS20,BorSF21}, $\mathcal{H}_2$-and $\mathcal{H}_\infty$-optimal approaches \cite{SatS18,SchV20,MosL20}, and spectral factorization \cite{BreU22}. For further methods, we refer to the references cited in \cite[Rem.~8.2]{MehU22-ppt}.

\subsection{Organization of the manuscript}
After this introduction, we recall basic results on \DMD and \OI (cf.~\cref{subsec:DMD}) and review the theory of linear time-invariant \pH systems in \cref{subsec:pH}. In particular, we introduce a definition for a discrete-time \pH system, following the more general discussion in \cite{KotL18,MehM19}. The \emph{port-Hamiltonian dynamic mode decomposition}~(\pHDMD) problem is formulated in \cref{subsec:pHDMD:problem}, yielding a convex minimization problem. An iterative algorithm to solve the \pHDMD problem is presented in \cref{subsec:pHDMD:twoProcrustes}, whereas we discuss a clever initialization in \cref{subsec:pHDMD:modifiedProblem} by solving the \pHDMD problem in a weighted norm. The algorithm is then applied to several numerical examples in \cref{sec:numerics}.

\subsection{Notation}
We use the symbols $\N$, $\R$, $\R^n$, and $\R^{n\times m}$ to denote the positive integers, the real numbers, the set of columns vectors with $n\in\N$ real entries, and the set of $n\times m$ real matrices, respectively. For a matrix $A\in\R^{n\times m}$ we denote its transpose with $A^\T$ and its Moore-Penrose pseudoinverse with $\pseudo{A}$. A matrix $A\in\R^{n\times n}$ is called symmetric (skew-symmetric), if $A = A^\T$ ($A = -A^\T$). The decomposition of a squared matrix $A \in \R^{n \times n}$ into its skew-symmetric resp. symmetric part is denoted as
\begin{displaymath}
	\Sym(A)=\tfrac{1}{2}(A + A^\T), \qquad \mathrm{resp.} \qquad \Skew(A)=\tfrac{1}{2}(A - A^\T).
\end{displaymath} 
The sets of all $n\times n$ symmetric positive definite and symmetric positive semi-definite matrices with real entries are denoted with $\Spd{n}$ and $\Spsd{n}$. 
The projection $\calP_\succeq(A)$ of a matrix $A\in\R^{n\times n}$ onto $\Spsd{n}$ is given by 
\begin{displaymath}
	\calP_\succeq(A) = \Xi \diag(\Lambda_+, 0, 0) \Xi^\T,
\end{displaymath}
where $\Sym(A) = \Xi \diag(\Lambda_+, \Lambda_-, 0) \Xi^\T$ is the ordered eigendecomposition of the symmetric part of $A$. Here, $\Lambda_+$ and $\Lambda_-$ are the diagonal matrices containing the positive and negative eigenvalues, respectively.
The set of nonsingular matrices of size $n\times n$ is denoted with $\GL{n}{\R}$. 
The Stiefel manifold of $n\times r$ dimensional matrices with real entries is denoted by
\begin{equation*}
	\stiefelM{n}{r} \vcentcolon= \left\{U\in \mathbb{R}^{n\times r} \mid U^\T U = I_r\right\},
\end{equation*}
where $I_r$ denotes the $r\times r$ identity matrix.
Furthermore, given $\Omega \in \Spsd{n}$ we denote the weighted Frobenius semi-norm as 
\begin{equation}
	\label{eqn:weightedFrobeniusNorm}
	\left\|A\right\|_{\Omega} \vcentcolon= \sqrt{\trace{(A^\T\Omega A)}}.
\end{equation}

\section{Preliminaries}

\subsection{Dynamic Mode Decomposition and Operator Inference}
\label{subsec:DMD}
Assume data triples $(\inpVar_i,\state_i,\outVar_i)\in\R^{\inpVarDim}\times\R^{\stateDim}\times \R^{\outVarDim}$ (\DMD) or data quadruples $(\inpVar_i,\state_i,\dot{\state}_i,\outVar_i)\in\R^{\inpVarDim}\times\R^{\stateDim}\times\R^{\stateDim} \times \R^{\outVarDim}$ (\OI) for $i=0,\ldots,\nrSnapshots$ of a dynamical system available, which may be obtained from measurements of real phenomena or the simulation of a model. The goal of (input-output) \DMD \cite{ProBK16,AnnGS16} or \OI \cite{PehW16} is to find matrices $\dmdA,\dmdB,\dmdC,\dmdD$ of suitable size minimizing
\begin{equation}
	\label{eqn:DMDminimization}
	\min_{\dmdA,\dmdB,\dmdC,\dmdD} \sum_{i=0}^{\nrSnapshots-1} \|\prefix \state_i - \dmdA\state_i - \dmdB\inpVar_i\|_2^2 
	+ \|\outVar_i - \dmdC\state_i - \dmdD\inpVar_i\|_2^2,
\end{equation}
where $\prefix \state_i \vcentcolon= \dot{\state}_i$ if we assume derivative information of the state to be available, and $\prefix \state_i \vcentcolon= \state_{i+1}$, otherwise. The goal of solving~\eqref{eqn:DMDminimization} is to determine a linear system of the form
\begin{equation}
	\label{eqn:discreteLTI}
	\begin{aligned}
	\prefix\dmdState &= \dmdA\dmdState + \dmdB\inpVar,\\
	\dmdOutVar &= \dmdC\dmdState + \dmdD\inpVar,
	\end{aligned}
\end{equation}
that best approximates the data (in the sense of~\eqref{eqn:DMDminimization}), where~$\prefix$ denotes the differentiation operator with respect to time if we are in the continuous time setting described by \OI, and~$\prefix$ denotes the forward shift in the discrete-time setting used in \DMD.
This is conveniently achieved by introducing the matrices
\begin{align*}
	Z_0 \vcentcolon= \begin{bmatrix}
		\state_0 & \ldots & \state_{\nrSnapshots-1}\\
		\inpVar_0 & \ldots & \inpVar_{\nrSnapshots-1}
	\end{bmatrix}\in\R^{(\stateDim+\inpVarDim)\times \nrSnapshots} \qquad \text{and}\qquad
	Z_1 \vcentcolon= \begin{bmatrix}
		\prefix\state_0 & \ldots & \prefix \state_{\nrSnapshots-1}\\
		\outVar_0 & \ldots & \outVar_{\nrSnapshots-1}
	\end{bmatrix}\in\R^{(\stateDim+\outVarDim)\times \nrSnapshots}
\end{align*}
and studying the equivalent problem
\begin{equation}
	\label{eqn:DMDminimizationMatrix}
	\min_{\dmdM\in\R^{(\stateDim + \outVarDim)\times (\stateDim + \inpVarDim)}} \frobnorm{Z_1 - \dmdM Z_0},
\end{equation}
whose solution, conformably partitioned, yields the matrices $\dmdA,\dmdB,\dmdC$, and $\dmdD$. The minimum norm solution $\dmdM^\star$ of~\eqref{eqn:DMDminimizationMatrix} is given by the Moore-Penrose pseudo-inverse of the data, i.e., $\dmdM^\star = Z_1\pseudo{Z_0}$. We emphasize that this is the unique solution of~\eqref{eqn:DMDminimizationMatrix} if and only if $r\vcentcolon= \rank(Z_0) = \stateDim + \inpVarDim$. The pseudo-inverse can be computed efficiently using the \emph{singular value decomposition} (\SVD). In more detail, set $r \vcentcolon= \rank(Z_0)$ and let $Z_0 = \svdL\Sigma\svdR^\T$ with $\svdL\in\stiefelM{\stateDim+\inpVarDim}{r}$, $\Sigma\in\GL{r}{\R}$, $\svdR\in\stiefelM{\nrSnapshots}{r}$ denote the skinny \SVD of $Z_0$. Then, $\pseudo{Z_0} = \svdR\Sigma^{-1}\svdL$. Note that from a numerical point of view, we truncate singular values below a given tolerance to ensure that the inverse $\Sigma^{-1}$, respectively the associated linear system, can be computed accurately. Such a truncation is equivalent to adding a regularization term to the minimization problem \eqref{eqn:DMDminimizationMatrix}, see \cite{BenHM18} for further details.

\begin{remark}
	\label{rem:derivativeApprox}
	If no measurements of the derivative are available, then a classical finite difference approach of the form
	\begin{align}
		\label{eqn:derivativeApproximation}
		\Delta \state_i \approx \dot{\hat{\state}}_i \vcentcolon= \frac{\state_{i+1} - \state_i}{\delta_i},
	\end{align}
	may be used as an approximation, where $\delta_i$ is the time step between the snapshots. This is a common approach in the literature, see, e.g., \cite{PehW16}, for which a convergence result for $\delta_i\to0$ is available \cite[Thm.\,1]{PehW16}. Throughout this manuscript, we do not assume $\dot{\state}_i$ available, and hence work with the approximation~\eqref{eqn:derivativeApproximation}, instead.
\end{remark}

The method can be further extended to include dimensionality reduction, which is then called \emph{reduced \OI}~\cite{PehW16}. There, a low dimensional basis for the state space must is constructed. Afterwards, the data is projected onto this low-dimensional basis. Finally,~\eqref{eqn:DMDminimization} is solved for the reduced data, which results in a \emph{reduced order model}. 
The low dimensional basis generation can be achieved, for instance, by applying the \SVD to the state data, i.e., let $X=\svdL \Sigma \svdR$ denote the skinny \SVD of $X$. Then the reduced basis $\podV$ of size~$r$ can be defined as the first~$r$ columns of $\svdL$. 

\subsection{Port-Hamiltonian system}
\label{subsec:pH}
As outlined in the introduction, \pH systems are a promising modeling tool that generalize the notion of Hamiltonian systems to allow for interaction with the environment. Since the aim of \DMD is to produce a linear system, we review the \pH framework only for linear dynamics and refer the reader for a more general definition to \cite{SchJ14, MehU22-ppt}.

\begin{definition}[Linear time-invariant \pH system]
	\label{def:ltiPH}
	Assume that we have given a quadratic function $\hamiltonian(\state) = \tfrac{1}{2}\state^\T \pHH\state$, called the \emph{Hamiltonian}, with $\pHH\in\Spd{\stateDim}$ and a suitable factorization $\pHH = E^\T Q$ with $E,Q\in\R^{\stateDim\times\stateDim}$. Then the system
	\begin{equation}
		\label{eqn:pHsystem}
		\begin{bmatrix}E\dot\state \\ \outVar\end{bmatrix} =
		\begin{bmatrix}A & B \\ C & D\end{bmatrix}\begin{bmatrix}Q\state \\ \inpVar\end{bmatrix}, \qquad
		\begin{bmatrix}A & B \\ -C & -D\end{bmatrix} = \calJ-\calR,
	\end{equation}
	with $\calJ =-\calJ^\top\in\R^{(\stateDim+\inpVarDim)\times(\stateDim+\inpVarDim)}$ and $\calR\in\Spsd{\stateDim+\inpVarDim}$ is called a \emph{\pH system}.
\end{definition}

In \Cref{def:ltiPH}, the matrix $\calJ$ represents the conservation of energy, while $\calR$ describes dissipation.
Note that the specific structure requires the input and output dimension to match and directly implies that \pH systems are stable and passive, and the Hamiltonian serves as a Lyapunov function. In more detail, any \pH system together with its Hamiltonian satisfies the dissipation inequality~\eqref{eqn:dissipationInequality}. Conversely, any passive linear time-invariant system, i.e., any system for which a quadratic Hamiltonian exists such that the system with this Hamiltonian satisfies the dissipation inequality~\eqref{eqn:dissipationInequality}, has a \pH representation, cf.~\cite{BeaMV19}.
Every \pH system~\eqref{eqn:pHsystem} can be equivalently written as
\begin{subequations}
	\label{eq:pHSystem}
	\begin{align}
		\label{eq:pHControl}E\dot{\state} &= (J-R)Q\state + (G-P)\inpVar,\\
		\label{eq:pHOutput}\outVar &= (G+P)^\T Q\state + (S-N)\inpVar,
	\end{align}
\end{subequations}
with $J = -J^\top$, $N = -N^\top$, and 
\begin{equation*}
	\calR = \begin{bmatrix}R & P\\P^\T & S\end{bmatrix}\in\Spsd{\stateDim+\inpVarDim}.
\end{equation*}
For further details on the properties that are encoded within this structure we refer to \cite{BeaMXZ18}.
Multiplication of \eqref{eq:pHControl} from the left with $Q^\T$, see \cite[Sec.~4.3]{MehU22-ppt}, and introducing the matrices
\begin{align*}
	\pHJ &\vcentcolon= Q^\T JQ, & 	\pHR &\vcentcolon= Q^\T RQ, & \pHG &\vcentcolon= Q^\T G, & \pHP &\vcentcolon= Q^\T P, & \pHDsym &\vcentcolon= S, & \pHDskew &\vcentcolon= N
\end{align*}
allows us to rewrite \eqref{eq:pHSystem} as 
\begin{subequations}
	\label{eq:pHSystem2}
	\begin{align}
		\label{eq:pHControl2}\pHH\dot{\state} &= (\pHJ-\pHR)\state + (\pHG-\pHP)\inpVar,\\
		\label{eq:pHOutput2}\outVar &= (\pHG+\pHP)^\T\state + (\pHDsym-\pHDskew)\inpVar,
	\end{align}
\end{subequations}
which is linear in the system matrices. Due to the congruence transformation, we immediately conclude $\pHJ = -\pHJ^\T$, $\pHDskew = -\pHDskew^\T$ and
\begin{displaymath}
	\pHW =
	\begin{bmatrix}
		\pHR & \pHP\\
		\pHP^\T & \pHDsym
	\end{bmatrix}\in\Spsd{\stateDim+\inpVarDim}.
\end{displaymath}

\begin{remark}
	Let us emphasize that from a mathematical perspective, the choice of the energy functional yielding to a \pH system is generally not unique. Indeed, any solution of the corresponding Kalman-Yakubovich-Popov inequality can be used as the Hessian of the energy; see \cite{BeaMV19} for further details. Furthermore, recent results detail that the choice of the energy functional characterizes how easy it is to approximate the system \cite{BreU22}.
\end{remark}

To understand \pH systems in the framework of \DMD, we need to find a discrete-time analogue of \eqref{eq:pHSystem2}. Since symplectic Gauss-Legendre collocation methods are able to preserve the underlying Dirac structure of the \pH system \cite{KotL18,MehM19}, and \DMD is able to recover the original dynamics if the \DMD approximation is based on a discretization with a 1-stage Runge-Kutta method \cite{HeiU22}, we use the implicit midpoint rule for the time-discretization. 
More precisely, for a constant step size $\timeStep>0$, the continuous dynamics \eqref{eq:pHControl2} are replaced with the discrete dynamics
\begin{equation*}
	\pHH\tfrac{\state_{i+1} - \state_i}{\timeStep}
	= (\pHJ-\pHR)\tfrac{\state_{i+1}+\state_{i}}{2} + (\pHG-\pHP)\tfrac{\inpVar_{i+1}+\inpVar_i}{2},
\end{equation*}
with $\state_i \approx \state(i\timeStep)$, and $\inpVar_i\vcentcolon= \inpVar(i\timeStep)$. Note that with respect to our goal to generate a discrete-time \pH system from data, we have replaced $\inpVar((i+\tfrac{1}{2})\timeStep)$ from the implicit midpoint rule with the approximation
\begin{displaymath}
	\inpVar((i+\tfrac{1}{2})\timeStep) \approx \tfrac{\inpVar_{i+1}+\inpVar_i}{2}.
\end{displaymath}
It is natural to also replace the continuous output equation \eqref{eq:pHOutput} with its discrete counterpart using $\outVar_i \approx \outVar(i\timeStep)$. To preserve port-Hamiltonian structure, we take the average of consecutive output equations arriving at 
\begin{equation*}
	\tfrac{\outVar_{i+1}+\outVar_i}{2} = (\pHG+\pHP)^\T\tfrac{\state_{i+1}+\state_i}{2} + (\pHDsym-\pHDskew)\tfrac{\inpVar_{i+1}+\inpVar_i}{2}.
\end{equation*}
The previous analysis motivates the following definition.
\begin{definition}[discrete-time \pH system]
	Consider sequences of states $x_i\in\mathbb{R}^\stateDim$, inputs $u_i\in\mathbb{R}^\inpVarDim$, outputs $y_i\in\mathbb{R}^\inpVarDim$, and a constant time step $\timeStep>0$.
	We call a system of the form
	\begin{subequations}
		\label{eq:discretePH}
		\begin{align}
			\label{eq:discretePHstateEq}\pHH\tfrac{\state_{i+1}-\state_i}{\timeStep} &= (\pHJ-\pHR)\tfrac{\state_{i+1}+\state_i}{2} + (\pHG-\pHP)\tfrac{\inpVar_{i+1}+\inpVar_i}{2},\\
			\label{eq:discretePHoutputEQ}\tfrac{\outVar_{i+1}+\outVar_i}{2} &= (\pHG+\pHP)^\T\tfrac{\state_{i+1}+\state_i}{2} + (\pHDsym-\pHDskew)\tfrac{\inpVar_{i+1}+\inpVar_i}{2},
		\end{align}
	\end{subequations}
	a \emph{discrete-time \pH system} with quadratic Hamiltonian $\hamiltonian(\state) \vcentcolon= \tfrac{1}{2} \state^\T \pHH\state$ if and only if $\pHJ=-\pHJ^\T$, $\pHDskew=-\pHDskew^\T$, and $\pHW=\left[\begin{smallmatrix}\pHR & \pHP\\\pHP^\T & \pHDsym\end{smallmatrix}\right]\in\Spsd{\stateDim+\inpVarDim}$.
\end{definition}
\begin{remark}
	The discrete-time \pH system~\eqref{eq:discretePH} is not in the standard form of a discrete-time dynamical system. Nevertheless, assuming $\timeStep$ to be sufficiently small, defining the matrices $\pHA\vcentcolon=\pHJ-\pHR$, $\pHB\vcentcolon=\pHG-\pHP$, $\pHC\vcentcolon=(\pHG+\pHP)^\T$, and $\pHD\vcentcolon=\pHDsym-\pHDskew$, and assuming a consistent initialization of the output, i.e., $\outVar_0 =\pHC\stateVar_0+\pHD\inpVar_0$, then~\eqref{eq:discretePH} can be rewritten as
	\begin{equation}\label{eq:discreteExplicitPH}
		\begin{aligned}
		\stateVar_{i+1} &= ( \tfrac{1}{\timeStep}\pHH - \tfrac{1}{2}\pHA )^{-1}( \tfrac{1}{\timeStep}\pHH + \tfrac{1}{2}\pHA )\stateVar_i + \tfrac{1}{2} ( \tfrac{1}{\timeStep}\pHH - \tfrac{1}{2}\pHA )^{-1} \pHB ( \inpVar_{i+1} + \inpVar_i ),\\
		\outVar_i &= \pHC\stateVar_i+\pHD\inpVar_i.
		\end{aligned}
	\end{equation}
	While the equation \eqref{eq:discreteExplicitPH} can be useful, since it represents $\stateVar_{i+1}$ explicitly in terms of $\stateVar_i$ and of the output variables, the $\pH$ structure of the original system is not evident.
	Because of that, we prefer working with the system \eqref{eq:discretePH} instead.
\end{remark}
Let us emphasize that \eqref{eq:discretePH} can be equivalently written as
\begin{equation}\label{eq:discretePH2}
	\bmat{\pHH\tfrac{\state_{i+1}-\state_i}{\timeStep} \\ -\tfrac{\outVar_{i+1}+\outVar_i}{2}} = (\pHL-\pHW)\bmat{\tfrac{\state_{i+1}+\state_i}{2} \\ \tfrac{\inpVar_{i+1}+\inpVar_i}{2}},
\end{equation}
where $\pHL=-\pHL^\T$ and $\pHW\in\Spd{\stateDim+\inpVarDim}$. Defining $\Delta\hamiltonian_i \vcentcolon= \tfrac{1}{\timeStep}(\hamiltonian(\state_{i+1}) - \hamiltonian(\state_i))$ and observing 
\begin{displaymath}
	\Delta\hamiltonian_i = (\tfrac{\state_{i+1}+\state_i}{2})^\T \pHH (\tfrac{\state_{i+1}-\state_{i}}{\timeStep})
\end{displaymath}
immediately yields the following discrete version of the dissipation inequality~\eqref{eqn:dissipationInequality}.

\begin{lemma}(Discrete-time dissipation inequality)
	Any discrete-time \pH system in the form of~\eqref{eq:discretePH} satisfies the discrete-time dissipation inequality
	\begin{equation}\label{eq:discretePassivity}
		\Delta\hamiltonian_i \leq (\tfrac{\outVar_{i+1}+\outVar_i}{2})^\T (\tfrac{\inpVar_{i+1}+\inpVar_i}{2}).
	\end{equation}
\end{lemma}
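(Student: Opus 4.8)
The plan is to derive \eqref{eq:discretePassivity} directly from the compact reformulation \eqref{eq:discretePH2} together with the identity $\Delta\hamiltonian_i = (\tfrac{\state_{i+1}+\state_i}{2})^\T \pHH (\tfrac{\state_{i+1}-\state_{i}}{\timeStep})$ recorded just above the lemma. First I would introduce the midpoint vector $\zeta_i \vcentcolon= \bmat{\tfrac{\state_{i+1}+\state_i}{2} \\ \tfrac{\inpVar_{i+1}+\inpVar_i}{2}} \in \R^{\stateDim+\inpVarDim}$ so that \eqref{eq:discretePH2} reads
\[
	\bmat{\pHH\tfrac{\state_{i+1}-\state_i}{\timeStep} \\ -\tfrac{\outVar_{i+1}+\outVar_i}{2}} = (\pHL-\pHW)\,\zeta_i,
\]
and then left-multiply this identity by $\zeta_i^\T$. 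Reading off the block structure on the left-hand side gives
\[
	\zeta_i^\T \bmat{\pHH\tfrac{\state_{i+1}-\state_i}{\timeStep} \\ -\tfrac{\outVar_{i+1}+\outVar_i}{2}}
	= \Bigl(\tfrac{\state_{i+1}+\state_i}{2}\Bigr)^\T \pHH \tfrac{\state_{i+1}-\state_i}{\timeStep}
	- \Bigl(\tfrac{\inpVar_{i+1}+\inpVar_i}{2}\Bigr)^\T \tfrac{\outVar_{i+1}+\outVar_i}{2},
\]
where the first summand equals $\Delta\hamiltonian_i$ by the recorded identity, and the second summand equals $\bigl(\tfrac{\outVar_{i+1}+\outVar_i}{2}\bigr)^\T\bigl(\tfrac{\inpVar_{i+1}+\inpVar_i}{2}\bigr)$ since it is a scalar and may therefore be transposed freely.

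For the right-hand side I would invoke the structural properties of $\pHL$ and $\pHW$ stated after \eqref{eq:discretePH2}: because $\pHL = -\pHL^\T$, the quadratic form $\zeta_i^\T \pHL \zeta_i$ vanishes, and because $\pHW \in \Spsd{\stateDim+\inpVarDim}$, we have $\zeta_i^\T \pHW \zeta_i \geq 0$. Hence $\zeta_i^\T(\pHL-\pHW)\zeta_i = -\zeta_i^\T \pHW \zeta_i \leq 0$. Equating the two sides yields
\[
	\Delta\hamiltonian_i - \Bigl(\tfrac{\outVar_{i+1}+\outVar_i}{2}\Bigr)^\T\Bigl(\tfrac{\inpVar_{i+1}+\inpVar_i}{2}\Bigr) = -\zeta_i^\T \pHW \zeta_i \leq 0,
\]
which is precisely \eqref{eq:discretePassivity}.

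There is no real obstacle here: the argument is a one-line energy estimate, and the two ingredients it relies on—the stacked form \eqref{eq:discretePH2}, obtained by collecting \eqref{eq:discretePHstateEq}–\eqref{eq:discretePHoutputEQ} into a single block equation with $\pHL-\pHW$ the indicated $(\stateDim+\inpVarDim)\times(\stateDim+\inpVarDim)$ matrix, and the telescoping identity for $\Delta\hamiltonian_i$, which follows from the polarization identity $a^\T\pHH a - b^\T\pHH b = (a-b)^\T\pHH(a+b)$ valid for symmetric $\pHH$—are already established in the excerpt. The only points requiring mild care are the sign in the second block of \eqref{eq:discretePH2}, which ensures that the supply term $\tfrac{\outVar_{i+1}+\outVar_i}{2}$ enters the dissipation inequality with the correct sign, and the harmless transposition of the scalar cross term.
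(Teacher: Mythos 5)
Your argument is correct and is exactly the one the paper intends: the lemma is stated as an immediate consequence of left-multiplying the stacked form \eqref{eq:discretePH2} by the midpoint vector, using the identity for $\Delta\hamiltonian_i$ together with $\zeta_i^\T\pHL\zeta_i=0$ and $\zeta_i^\T\pHW\zeta_i\geq 0$. Your write-up simply makes explicit the steps the paper leaves implicit, including the correct use of positive semi-definiteness of $\pHW$ as required by the definition of a discrete-time \pH system.
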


\begin{remark}
	To simplify our presentation, we use a constant time step $\timeStep$ throughout this manuscript. Nevertheless, it is straightforward to use a variable time step in all what follows.
\end{remark}

\section{Port-Hamiltonian Dynamic Mode Decomposition}
\label{sec:pHDMD}

In this section, we discuss a variant of \DMD that allows to construct a continuous-time \pH system from discrete-time data.

\subsection{The port-Hamiltonian Dynamic Mode Decomposition problem}
\label{subsec:pHDMD:problem}

If the data $(\state_i,\inpVar_i,\outVar_i)$ at hand are obtained from a physical system, we expect the real system to have a \pH representation and thus want to reflect that in our time-discrete realization. Following our concise definition of a discrete-time \pH system \eqref{eq:discretePH2}, we are thus interested in solving the following problem.

\begin{problem}[Port-Hamiltonian Dynamic Mode Decomposition]
	\label{problem:pHDMD}
	Consider data points $(\state_i,\inpVar_i,\outVar_i)\in\mathbb{R}^{\stateDim}\times\mathbb{R}^{\inpVarDim}\times\mathbb{R}^\inpVarDim$ for $i=0,1,\ldots,\nrSnapshots$ and define the matrices
	\begin{align*}
		\dmdW &\vcentcolon= \tfrac{1}{\timeStep}
		\begin{bmatrix}
			\state_1-\state_0 & \ldots & \state_{\nrSnapshots} - \state_{\nrSnapshots-1}
		\end{bmatrix}\in\R^{\stateDim\times\nrSnapshots},\\
		\dmdV &\vcentcolon= \tfrac{1}{2}\begin{bmatrix}
			\state_1+\state_0 & \ldots & \state_{\nrSnapshots} + \state_{\nrSnapshots-1}
		\end{bmatrix}\in\R^{\stateDim\times\nrSnapshots},\\
		\dmdU &\vcentcolon= \tfrac{1}{2}\begin{bmatrix}
			\inpVar_1 + \inpVar_0 & \ldots & \inpVar_{\nrSnapshots} + \inpVar_{\nrSnapshots-1}
		\end{bmatrix} \in\R^{\inpVarDim\times\nrSnapshots},\\
		\dmdY &\vcentcolon= \tfrac{1}{2}\begin{bmatrix}
			\outVar_1 + \outVar_0 & \ldots & \outVar_{\nrSnapshots} + \outVar_{\nrSnapshots-1}
		\end{bmatrix}\in\R^{\inpVarDim\times\nrSnapshots}.
	\end{align*}
	Given any matrix $\pHH\in\Spd{\stateDim}$ (defining the Hamiltonian), and a reduced basis $\podV\in\R^{\stateDim\times\stateDimRed}$, find matrices $\dmdJJ,\dmdRR\in\R^{\stateDimGen\times\stateDimGen}$ with $\stateDimGen \vcentcolon= \stateDimRed + \inpVarDim$ that solve 
	\begin{equation}
		\label{eqn:pHDMDminimzation}
		\min \frobnorm{\reduce{\dataZ} - (\dmdJJ-\dmdRR)\reduce{\dataT}}\qquad\text{such that $\dmdJJ = -\dmdJJ^\top\in\R^{\stateDimGen\times\stateDimGen}$ and $\dmdRR\in\Spsd{\stateDimGen}$},
	\end{equation}
	where $\reduce{\dataZ} \vcentcolon= \bmat{\podV^\T H \podV \podV^\T\dmdW \\ -\dmdY}$ and $\reduce{\dataT} \vcentcolon= \bmat{\podV^\T\dmdV \\ \dmdU}$. If no dimension reduction is applied, then we set $\stateDimRed = \stateDim$ and $\Phi=I_{\stateDim}$.
\end{problem}

\begin{remark}
	Note that the structural properties of $\dmdJJ$ and $\dmdRR$ are preserved under the transformation with $\podV$, since it is a congruence transformation.
\end{remark}

Using standard arguments, it is easy to establish that the minimization problem~\eqref{eqn:pHDMDminimzation} is convex and solvable. Moreover, let us emphasize that by solving \Cref{problem:pHDMD}, we directly learn the matrices of a continuous-time \pH system without requiring further postprocessing. Moreover, \Cref{problem:pHDMD} includes two important special cases. First, suppose we know a-priori that no dissipation is involved. In that case, we can set $\dmdRR=0$, and the minimization problem~\eqref{eqn:pHDMDminimzation} reduces to a skew-symmetric Procrustes problem, which can be solved analytically. We refer to \cite{DenHZ03,BadHMKB21} and the forthcoming \Cref{subsec:pHDMD:twoProcrustes}. Second, if the system has no input and output, i.e., we have $\inpVarDim = 0$, then the task reduces to the identification of a dissipative Hamiltonian system, i.e., we want to solve
\begin{equation}
	\label{eqn:dissipativeHamiltonianDMD}
	\min \frobnorm{\pHH\dmdW - \begin{bmatrix}
		\dmdJ - \dmdR
	\end{bmatrix}\dmdV}\qquad\text{such that $\dmdJ = -\dmdJ^\top\in\R^{\stateDimGen\times\stateDimGen}$ and $\dmdR\in\Spsd{\stateDimGen}$}.
\end{equation}

\begin{remark}
	If instead of prescribing the Hamiltonian and thus the matrix $\pHH$, one also wants to determine $\pHH\in\Spd{\stateDim}$ from the data, then we observe that~\eqref{eqn:dissipativeHamiltonianDMD} is not a suitable formulation, since in this case~\eqref{eqn:dissipativeHamiltonianDMD} is not solvable. To see this, notice that $\Spd{\stateDim}$ is an open set. Hence, for any $\varepsilon>0$, set $\dmdJ = \dmdR = 0\in\R^{\stateDim\times\stateDim}$ and $\pHH = \smash{\tfrac{\varepsilon}{2\|\dmdW\|_{\mathrm{F}}}I_{\stateDim}}$. Then $\pHH\in\Spd{\stateDim}$, $\smash{\dmdJ} = -\smash{\dmdJ^\T}$, $\smash{\dmdR}\in\Spsd{\stateDim}$ and we have
	\begin{displaymath}
		\frobnorm{\pHH\dmdW - \begin{bmatrix}
			\dmdJ-\dmdR
		\end{bmatrix}\dmdV} = \bigg\|\tfrac{\varepsilon}{2\|\dmdW\|_{\mathrm{F}}}\dmdW\bigg\|_{\mathrm{F}} = \tfrac{\varepsilon}{2} < \varepsilon,
	\end{displaymath}
	implying that the infimum of \eqref{eqn:dissipativeHamiltonianDMD} is zero with infimizer $H = \dmdJ = \dmdR = 0$. 
\end{remark}

\subsection{An iterative algorithm for the pHDMD problem}
\label{subsec:pHDMD:twoProcrustes}

Having established the existence of a solution for \Cref{problem:pHDMD}, we will now derive a numerical algorithm to solve the minimization problem~\eqref{eqn:pHDMDminimzation}. In~\cite{GilS17}, a \emph{fast gradient method} (\FGM), cf.~\cite[p.\,90]{Nes03}, for the nearest stable matrix to a given matrix is proposed, which is formulated similarly as \Cref{problem:pHDMD}. The \FGM is an optimal first-order method for convex optimization, which means no other first-order method can converge faster while using the same first-order information.	
However, we notice that if we already have a guess for $\dmdRR$, then \Cref{problem:pHDMD} simplifies to the skew-symmetric Procrustes problem
\begin{equation}
	\label{eq:skewSymProcrustes}
	\min \frobnorm{\skewProcB - \dmdJJ\dataT} \qquad\text{such that $\dmdJJ = -\dmdJJ^\top\in\R^{\stateDimGen\times\stateDimGen}$},
\end{equation}
where $\skewProcB \vcentcolon= \dataZ + \dmdRR \dataT$. Fortunately, the solution of~\eqref{eq:skewSymProcrustes} can be computed analytically as detailed in the following theorem taken as a special case of~\cite[Lem.\,2.1]{DenHZ03}, so we propose to include this knowledge into the iterative algorithm to achieve acceleration.

\begin{theorem}
	\label{thm:skewSymmetricProcrustes}
	Let $\dataT,\skewProcB\in\mathbb{R}^{\stateDimGen\times \nrSnapshots}$ and let $\svdL\Sigma\svdR^\T = \dataT$ denote the \SVD of $\dataT$ with
	\begin{displaymath}
		\Sigma = \begin{bmatrix}
			\Sigma_1 & 0\\
			0 & 0
		\end{bmatrix},\qquad \text{with}\ \Sigma_1 = \diag(\sigma_1,\ldots,\sigma_r)\in\mathbb{R}^{r\times r}\ \text{and}\ r = \rank(\procA).
	\end{displaymath}
	Define $\Phi = [\phi_{ij}]\in\R^{r\times r}$ via $\phi_{ij} = \frac{1}{\sigma_i^2 + \sigma_j^2}\qquad\text{for}\ i,j=1,\ldots,r$. Then 
	\begin{equation}
		\label{eq:skewSymProcrustesSol}
		\dmdJJ = \svdL^\T\begin{bmatrix}
			\Phi\odot \left(2\Skew(\procB_1\Sigma_1)\right) & -\Sigma_1^{-1}\procB_3^\T\\
			\procB_3\Sigma_1^{-1} & \dmdJJ_4
		\end{bmatrix}\svdL
	\end{equation}
	is a solution of \eqref{eq:skewSymProcrustes} for any skew-symmetric matrix $\dmdJJ_4\in\mathbb{R}^{(\stateDimGen-r)\times(\stateDimGen-r)}$, where
	\begin{align*}
		\procB_1 &= \begin{bmatrix}
				I_r & 0
			\end{bmatrix}\svdL\skewProcB \svdR^\T\begin{bmatrix}
				I_r & 0
			\end{bmatrix}^\T\in\mathbb{R}^{r\times r},\\
		\procB_3 &= \begin{bmatrix}
				0 & I_{\stateDim-r}
			\end{bmatrix}\svdL\skewProcB \svdR^\T\begin{bmatrix}
				I_r & 0
			\end{bmatrix}^\T\in\mathbb{R}^{(\stateDimGen-r) \times r}.
	\end{align*}
	The solution is unique if, and only if, $\rank(\procA) = \stateDimGen$.
\end{theorem}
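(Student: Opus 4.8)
The plan is to reduce the constrained least-squares problem \eqref{eq:skewSymProcrustes} to an unconstrained one by changing variables with the orthogonal factor of the SVD, and then solving entrywise. First I would substitute $\dataT = \svdL\Sigma\svdR^\T$ and use orthogonal invariance of the Frobenius norm twice: multiply inside the norm by $\svdL^\T$ on the left (which also rotates the unknown) and by $\svdR$ on the right (which only rotates the data). Concretely, writing $\widehat{\dmdJJ} \vcentcolon= \svdL^\T\dmdJJ\svdL$ — which ranges over all skew-symmetric matrices as $\dmdJJ$ does, since $\svdL$ is orthogonal — and $\widehat{\skewProcB} \vcentcolon= \svdL^\T\skewProcB\svdR$, the objective becomes $\frobnorm{\widehat{\skewProcB} - \widehat{\dmdJJ}\Sigma}$. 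This decouples the problem according to the block structure induced by $\Sigma = \diag(\Sigma_1,0)$.

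Next I would partition $\widehat{\dmdJJ} = \bsmat{\widehat{\dmdJJ}_1 & \widehat{\dmdJJ}_2 \\ -\widehat{\dmdJJ}_2^\T & \widehat{\dmdJJ}_4}$ (with $\widehat{\dmdJJ}_1, \widehat{\dmdJJ}_4$ skew-symmetric) and $\widehat{\skewProcB}$ conformably into blocks $\widehat{\skewProcB}_1,\widehat{\skewProcB}_2$ (top) and $\widehat{\skewProcB}_3,\widehat{\skewProcB}_4$ (bottom), matching the identification of $\procB_1,\procB_3$ in the statement. Since $\widehat{\dmdJJ}\Sigma = \bsmat{\widehat{\dmdJJ}_1\Sigma_1 & 0 \\ -\widehat{\dmdJJ}_2^\T\Sigma_1 & 0}$, the squared objective splits as
\begin{displaymath}
	\frobnorm{\widehat{\skewProcB}_1 - \widehat{\dmdJJ}_1\Sigma_1}^2 + \frobnorm{\widehat{\skewProcB}_3 + \widehat{\dmdJJ}_2^\T\Sigma_1}^2 + \frobnorm{\widehat{\skewProcB}_2}^2 + \frobnorm{\widehat{\skewProcB}_4}^2,
\end{displaymath}
so the last two terms are constants, $\widehat{\dmdJJ}_4$ is free, and the minimization decouples into the off-diagonal block problem and the skew-symmetric diagonal block problem. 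The off-diagonal term is a genuine linear least-squares problem in $\widehat{\dmdJJ}_2^\T$ with full-rank coefficient $\Sigma_1$, so it is minimized (uniquely) by $\widehat{\dmdJJ}_2^\T = -\Sigma_1^{-1}\widehat{\skewProcB}_3$, i.e.\ $\widehat{\dmdJJ}_2 = -\widehat{\skewProcB}_3^\T\Sigma_1^{-1}$, which recovers the off-diagonal blocks in \eqref{eq:skewSymProcrustesSol} after identifying $\widehat{\skewProcB}_3 = \procB_3$.

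The remaining block $\widehat{\dmdJJ}_1\Sigma_1$ is where the only real work lies: I would write the first term as $\frobnorm{\widehat{\skewProcB}_1 - \widehat{\dmdJJ}_1\Sigma_1}^2$ and expand it entrywise in terms of the entries $(\widehat{\dmdJJ}_1)_{ij}$, using $(\widehat{\dmdJJ}_1)_{ji} = -(\widehat{\dmdJJ}_1)_{ij}$. Each pair of indices $(i,j)$ with $i<j$ contributes a scalar quadratic in the single variable $(\widehat{\dmdJJ}_1)_{ij}$ coming from the two mirrored matrix entries; setting its derivative to zero gives $(\widehat{\dmdJJ}_1)_{ij}\,(\sigma_i^2+\sigma_j^2) = \sigma_j(\widehat{\skewProcB}_1)_{ij} - \sigma_i(\widehat{\skewProcB}_1)_{ji}$, i.e.\ $\widehat{\dmdJJ}_1 = \Phi \odot \bigl(2\Skew(\widehat{\skewProcB}_1\Sigma_1)\bigr)$ once one checks that $2\Skew(\widehat{\skewProcB}_1\Sigma_1)$ has $(i,j)$-entry $\sigma_j(\widehat{\skewProcB}_1)_{ij} - \sigma_i(\widehat{\skewProcB}_1)_{ji}$ and that the Hadamard product with $\Phi$ produces a skew-symmetric matrix (since $\Phi$ is symmetric and the other factor skew-symmetric). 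This matches $\procB_1 = \widehat{\skewProcB}_1$ in the statement. The main obstacle — really a bookkeeping obstacle rather than a conceptual one — is carefully handling the entrywise derivative of the skew-symmetric block so that the two contributions from entries $(i,j)$ and $(j,i)$ are combined correctly without double counting, and verifying the Hadamard/$\Skew$ identity; everything else is orthogonal invariance plus separable convex quadratics. Finally, uniqueness: the objective depends on $\dmdJJ$ only through $\dmdJJ\Sigma$, hence only through $\widehat{\dmdJJ}$ restricted to its first $r$ rows; so $\dmdJJ$ is uniquely determined iff $r = \stateDimGen$, in which case the free block $\widehat{\dmdJJ}_4$ and the undetermined columns disappear, and conversely if $r<\stateDimGen$ then $\widehat{\dmdJJ}_4$ (or the last $\stateDimGen-r$ columns) can be varied without changing the objective.
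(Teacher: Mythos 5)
Your proof is correct and is essentially the standard argument behind the cited result \cite[Lem.~2.1]{DenHZ03}, which the paper invokes without reproducing: orthogonal change of variables via the SVD, block decoupling induced by $\Sigma=\diag(\Sigma_1,0)$ (so that $\widehat{\dmdJJ}_4$ and the blocks $\widehat{\skewProcB}_2,\widehat{\skewProcB}_4$ drop out), exact solution of the off-diagonal least-squares block, and entrywise minimization of the strictly convex scalar quadratics pairing the $(i,j)$ and $(j,i)$ entries of the skew-symmetric diagonal block, which yields exactly $(\widehat{\dmdJJ}_1)_{ij}(\sigma_i^2+\sigma_j^2)=\sigma_j(\widehat{\skewProcB}_1)_{ij}-\sigma_i(\widehat{\skewProcB}_1)_{ji}$ and hence the Hadamard formula. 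One small slip to fix in the write-up: the off-diagonal normal equation is $\widehat{\dmdJJ}_2^\T\Sigma_1=-\widehat{\skewProcB}_3$, so $\widehat{\dmdJJ}_2^\T=-\widehat{\skewProcB}_3\Sigma_1^{-1}$ and $\widehat{\dmdJJ}_2=-\Sigma_1^{-1}\widehat{\skewProcB}_3^\T$; your expressions $-\Sigma_1^{-1}\widehat{\skewProcB}_3$ and $-\widehat{\skewProcB}_3^\T\Sigma_1^{-1}$ place $\Sigma_1^{-1}$ on the wrong side (and are dimensionally inconsistent unless $r=\stateDimGen-r$), although the blocks you then match against \eqref{eq:skewSymProcrustesSol} are the correct ones.
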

\begin{proof}
	The proof follows along the proof of \cite[Lem.~2.1]{DenHZ03}.
\end{proof}
On the other hand if $\dmdJJ$ is given, \Cref{problem:pHDMD} simplifies to a symmetric positive definite Procrustes problem
\begin{equation}\label{eq:spsdProcrustes}
	\min \frobnorm{\spsdProcB - \dmdRR\dataT} \qquad\text{s.t. $\dmdRR\in\Spsd{\stateDimGen}$},
\end{equation}
where $\spsdProcB=\dmdJJ \dataT - \dataZ$.
Algorithmic solutions for this problem are available \cite{GilS18a}, for instance a \FGM.
By modifying this algorithm to allow a $\spsdProcB$ depending on the optimal solution $\dmdJJ$ of the skew-symmetric Procrustes problem~\eqref{eq:skewSymProcrustes}, we arrive at \Cref{alg:pHDMDIteration}.
\begin{algorithm}[htb]
	\caption{Semi-analytical fast gradient method for \eqref{eqn:pHDMDminimzation}}
	\label{alg:pHDMDIteration}
	\begin{algorithmic}[1]
		\Statex \textbf{Input:} Data matrices $\dataZ, \dataT \in \R^{\stateDimGen \times \nrSnapshotsGen}$, initial guess for the dissipative part $\dmdRR^{(0)}\in\Spsd{\stateDim}$
		\Statex \textbf{Output:} Matrices $\dmdJJ = -\dmdJJ^\T\in\mathbb{R}^{\stateDimGen\times\stateDimGen}$ and $\dmdRR\in\Spsd{\stateDimGen}$ that minimize \eqref{eqn:pHDMDminimzation}
		\Statex
		\State $L = \sigma^2_1(\dataT)$, $q=\frac{\sigma^2_r}{L}$;
		\State $k=0$; $\alpha_1 \in (0,1)$;
		\State $Q = \dmdRR^{(0)}$;
		\While{not converged}
			\State \label{alg:pHDMD:skew1} $\dataZ_1 = \dataZ + \dmdRR^{(k)} \dataT$
			\State \label{alg:pHDMD:skew2} Solve \eqref{eq:skewSymProcrustes} for $\dataZ_1, \dataT$ according to \Cref{thm:skewSymmetricProcrustes} to obtain $\dmdJJ^{(k+1)}$ 
			\State \label{alg:pHDMD:descentDirection1} $\dataZ_2 = \dmdJJ^{(k+1)} \dataT - \dataZ$
			\State \label{alg:pHDMD:descentDirection2} $\nabla = Q \dataT \dataT^\T - \dataZ_2 \dataT$
			\State \label{alg:pHDMD:spds} $\dmdRR^{(k+1)} = \mathcal{P}_\succeq(Q - \frac{1}{L}\nabla)$
			\State \label{alg:pHDMD:update1} $\alpha_{k+1} = \tfrac{1}{2}(q-\alpha_k^2 + \sqrt{(q-\alpha_k^2)^2 + 4\alpha_k^2}),\quad\beta_k=\frac{\alpha_k(1-\alpha_k)}{\alpha_k^2+\alpha_{k+1}}$
			\State \label{alg:pHDMD:update2} $Q = \dmdRR^{(k+1)} + \beta_k (\dmdRR^{(k+1)} - \dmdRR^{(k)})$
			\State $k = k + 1$
		\EndWhile
		\State $\dmdJJ = \dmdJJ^{(k)}$, $\dmdRR = \dmdRR^{(k)}$
	\end{algorithmic}
\end{algorithm}
In more detail, within each iteration step of \Cref{alg:pHDMDIteration}, we first compute the solution of the skew-symmetric Procrustes problem (\cref{alg:pHDMD:skew1,alg:pHDMD:skew2}) and then compute the gradient with respect to the matrix $\dmdRR$ in \cref{alg:pHDMD:descentDirection1,alg:pHDMD:descentDirection2}, ignoring for the moment that we need the new iterate to be symmetric positive definite. This is achieved by projecting onto the cone of symmetric positive definite matrices in \cref{alg:pHDMD:spds}. The update is then computed as a linear combination of the current and previous iterate with the fast gradient coefficients  (cf.~\cref{alg:pHDMD:update1,alg:pHDMD:update2}).

\begin{remark}
	In general, we cannot guarantee convergence of \Cref{alg:pHDMDIteration}. Nevertheless, we can use the standard safety strategy for the fast-gradient, as, for instance, reported in \cite{GilS17}, by using a reinitialization with a standard gradient step and a backtracking line search. By doing so, classical convergence results can be obtained since our objective function is convex and solvable.
\end{remark}

We notice in our numerical experiments that the performance of the algorithm strongly depends on the initialization and may need many iterations to converge if a poor initialization is used. We thus study a particular initialization strategy in the next subsection by analyzing \Cref{problem:pHDMD}  in a weighted norm.
\subsection{A weighted pHDMD problem}
\label{subsec:pHDMD:modifiedProblem}

A different but related problem arises when the Frobenius norm of the \Cref{problem:pHDMD} is replaced with the weighted Frobenius seminorm introduced in~\eqref{eqn:weightedFrobeniusNorm}, where $\dataT^\T\dataT$ is used as the semi-definite weighting matrix.
\begin{problem}[Weighted Input-Output port-Hamiltonian Dynamic Mode Decomposition]
\label{problem:projPHDMD}
For given data $\dataZ,\dataT\in\R^{\stateDimGen\times\nrSnapshotsGen}$, solve the optimization problem 
\begin{equation}\label{eq:projOptimization}
	\min \frobnorm{\dataT^\T(\dataZ - (\dmdJJ-\dmdRR)\dataT)} \qquad\text{such that $\dmdJJ = -\dmdJJ^\top\in\R^{\stateDimGen\times\stateDimGen}$ and $\dmdRR\in\Spsd{\stateDimGen}$}.
\end{equation}
\end{problem}
Two remarks are in order, first we immediately notice that by solving the weighted problem we solve the original problem, but weighted according to the relevant information in the data.
Second, using the definition of the data matrices from \Cref{problem:pHDMD}, we observe that the $(i,i)$ entry of the matrix $\dataT^\T(\dataZ - (\dmdJJ-\dmdRR)\dataT)$ reads
\begin{align*}
	e_i^\T (\dataT^\T(\dataZ - (\dmdJJ-\dmdRR)\dataT)e_i 
	&= e_i^\T (\dmdV)^\T \pHH \dmdW e_i - e_i^\T \dmdU^\T\dmdY e_i + e_i^\T \begin{bmatrix}
		(\dmdV)^\T & \dmdU^\T
	\end{bmatrix} \dmdRR \begin{bmatrix}
		\dmdV\\\dmdU
	\end{bmatrix}e_i\\
	&= \Delta \hamiltonian_i - (\tfrac{\outVar_{i+1} + \outVar_{i}}{2})^\T(\tfrac{\inpVar_{i+1} + \inpVar_{i}}{2}) + e_i^\T \begin{bmatrix}
		(\dmdV)^\T & \dmdU^\T
	\end{bmatrix} \dmdRR \begin{bmatrix}
		\dmdV\\\dmdU
	\end{bmatrix}e_i,
\end{align*}
which resembles the power-balance equation corresponding to the discrete-time dissipation inequality~\eqref{eq:discretePassivity}. In particular, the weighted problem~\eqref{eq:projOptimization} uses the dissipation inequality as part of the cost-functional to determine the dissipative component, for which we need a good initialization.
\begin{theorem}\label{thm:projectedApproach}
	Let $\dataT,\dataZ\in\R^{\stateDimGen\times \nrSnapshotsGen}$ and let $\svdL_1\Sigma_1 \svdR_1^\T = \dataT$ denote the skinny \SVD of $\dataT$, i.e., $\svdL_1\in\stiefelM{\stateDimGen}{r}$, $\Sigma_1\in\GL{r}{\R}$, and $\svdR_1\in\stiefelM{\nrSnapshotsGen}{r}$, where $r \vcentcolon= \rank(\dataT)$. Moreover, let $\svdR_2$ complement $\svdR_1$ to an orthogonal matrix, i.e., $\begin{bmatrix}
		\svdR_1 & \svdR_2
	\end{bmatrix}\in\stiefelM{\nrSnapshotsGen}{\nrSnapshotsGen}$.
	Define
	\begin{align*}
		\widetilde{\dataZ}_1 &\vcentcolon= \Sigma_1\svdL_1^\T \dataZ \svdR_1, &
		\widetilde{\dataZ}_2 &\vcentcolon= \Sigma_1\svdL_1^\T \dataZ \svdR_2,
	\end{align*}
	and
	\begin{align}
		\label{eqn:projectedOptimization:optimalJR}
		\dmdJJ^\star \vcentcolon= \svdL_1\Sigma_1^{-1} \Skew(\widetilde{\dataZ}_1)\Sigma_1^{-1}\svdL_1^\T \qquad\text{and}\qquad
		\dmdRR^\star \vcentcolon= \svdL_1\Sigma_1^{-1}\projPSD(-\widetilde{\dataZ}_1)\Sigma_1^{-1} \svdL_1^\T.
	\end{align}
	Then, $\dmdJJ^\star$ and $\dmdRR^\star$ are the unique minimum-norm minimizers of~\eqref{eq:projOptimization} with
	\begin{equation}
		\label{eqn:projectedOptimization:OptimalValue}
		\frobnorm{\dataT^\T \dataZ - \dataT^\T(\dmdJJ^\star-\dmdRR^\star)\dataT}^2 = \frobnorm{\widetilde{\dataZ}_2}^2 + \frobnorm{\Lambda_{+}}^2,
	\end{equation}
	where $\Lambda_{+}$ is the diagonal matrix which contains the positive eigenvalues of $\Sym(\widetilde{\dataZ}_1)$
\end{theorem}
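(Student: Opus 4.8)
The plan is to diagonalize the weighted objective via the singular value decomposition of $\dataT$, turning \eqref{eq:projOptimization} into a small matrix-nearness problem that separates into a trivially solvable skew-symmetric Procrustes part and a projection onto the positive-semidefinite cone. Inserting the skinny SVD $\dataT=\svdL_1\Sigma_1\svdR_1^\T$ and using $\dataT^\T=\svdR_1\Sigma_1\svdL_1^\T$, both $\dataT^\T\dataZ$ and $\dataT^\T(\dmdJJ-\dmdRR)\dataT$ carry $\svdR_1$ as a left factor, so, since $\svdR_1$ has orthonormal columns, the norm equals that of the stripped $r\times\nrSnapshotsGen$ matrix $\Sigma_1\svdL_1^\T\dataZ-\Sigma_1\svdL_1^\T(\dmdJJ-\dmdRR)\svdL_1\Sigma_1\svdR_1^\T$; postmultiplying this by the orthogonal $[\svdR_1\ \svdR_2]$ preserves the norm and yields
\begin{equation*}
	\frobnorm{\dataT^\T(\dataZ-(\dmdJJ-\dmdRR)\dataT)}^2 = \frobnorm{\widetilde{\dataZ}_1 - \Sigma_1\svdL_1^\T(\dmdJJ-\dmdRR)\svdL_1\Sigma_1}^2 + \frobnorm{\widetilde{\dataZ}_2}^2 ,
\end{equation*}
where the second summand does not depend on the unknowns.

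Next I would use that the congruences $\dmdJJ\mapsto\Sigma_1\svdL_1^\T\dmdJJ\svdL_1\Sigma_1$ and $\dmdRR\mapsto\Sigma_1\svdL_1^\T\dmdRR\svdL_1\Sigma_1$ map the skew-symmetric, respectively positive semidefinite, $\stateDimGen\times\stateDimGen$ matrices onto all skew-symmetric, respectively positive semidefinite, $r\times r$ matrices — with $M\mapsto\svdL_1\Sigma_1^{-1}M\Sigma_1^{-1}\svdL_1^\T$ a structure-preserving right inverse — and that their images can be chosen independently. The problem then reduces to minimizing $\frobnorm{\widetilde{\dataZ}_1-M_J+M_R}^2$ over skew-symmetric $M_J$ and $M_R\in\Spsd{r}$. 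Splitting $\widetilde{\dataZ}_1-M_J+M_R=(\Sym(\widetilde{\dataZ}_1)+M_R)+(\Skew(\widetilde{\dataZ}_1)-M_J)$ into its Frobenius-orthogonal symmetric and skew-symmetric parts, the objective separates as $\frobnorm{\Skew(\widetilde{\dataZ}_1)-M_J}^2+\frobnorm{\Sym(\widetilde{\dataZ}_1)+M_R}^2$. The first term vanishes, uniquely, at $M_J=\Skew(\widetilde{\dataZ}_1)$; the second is the squared distance of $-\Sym(\widetilde{\dataZ}_1)$ to $\Spsd{r}$, attained uniquely at $M_R=\projPSD(-\widetilde{\dataZ}_1)$, with value the sum of squares of the eigenvalues removed by the projection, i.e.\ the negatives of the positive eigenvalues of $\Sym(\widetilde{\dataZ}_1)$, which is $\frobnorm{\Lambda_+}^2$. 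Transporting $M_J,M_R$ back through the right inverse yields \eqref{eqn:projectedOptimization:optimalJR}, and adding the two residuals gives \eqref{eqn:projectedOptimization:OptimalValue}.

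For the minimum-norm and uniqueness statement, I would observe that the objective depends on $\dmdJJ$ and $\dmdRR$ only through the compressions $\svdL_1^\T\dmdJJ\svdL_1$ and $\svdL_1^\T\dmdRR\svdL_1$, which at every minimizer equal $\Sigma_1^{-1}\Skew(\widetilde{\dataZ}_1)\Sigma_1^{-1}$ and $\Sigma_1^{-1}\projPSD(-\widetilde{\dataZ}_1)\Sigma_1^{-1}$, respectively. Completing $\svdL_1$ to an orthogonal matrix $[\svdL_1\ \svdL_2]$ and expanding the unknowns in this basis, the only remaining freedom lies in the blocks involving $\svdL_2$; these enter $\frobnorm{\dmdJJ}^2$ and $\frobnorm{\dmdRR}^2$ additively with nonnegative weight, and for $\dmdRR$ they may be set to zero while keeping $\dmdRR\succeq 0$, because the fixed block $\svdL_1^\T\dmdRR\svdL_1=\Sigma_1^{-1}\projPSD(-\widetilde{\dataZ}_1)\Sigma_1^{-1}$ is itself positive semidefinite. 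Annihilating these blocks therefore produces the unique minimum-norm minimizer, which is precisely $\dmdJJ^\star,\dmdRR^\star$; if $r=\stateDimGen$ there is no freedom left and the minimizer is unique outright.

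The SVD reduction and the symmetric/skew split are routine. I expect the main obstacle to be the last paragraph — the orthogonal-completion bookkeeping, and above all verifying that the positive-semidefiniteness constraint on $\dmdRR$ survives the annihilation of its off-range blocks, so that the minimum-norm minimizer is genuinely realized in the stated closed form — together with the eigenvalue accounting behind the residual $\frobnorm{\Lambda_+}^2$.
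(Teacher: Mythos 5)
Your proposal is correct and follows essentially the same route as the paper: reduce via the SVD of $\calT$ to an $r\times r$ nearness problem, solve it by the orthogonal split into symmetric and skew parts (which is exactly the content of the paper's \Cref{lem:skewAndSpsdFit}, cited rather than re-derived there), and obtain the minimum-norm minimizer by annihilating the blocks outside the range of $\svdL_1$ while checking that positive semidefiniteness survives. The points you flag as potential obstacles — feasibility of zeroing the off-range blocks of $\dmdRR$ and the eigenvalue accounting for $\frobnorm{\Lambda_+}^2$ — go through exactly as you describe.
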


\begin{remark}
	The two terms on the right-hand side of~\eqref{eqn:projectedOptimization:OptimalValue} can be interpreted as follows. The first term is only present if we have too much data to fit, i.e., if $\dataT$ has more columns than rows. Then, in general, no linear system can perfectly capture the data, and the corresponding error contribution is $\frobnorm{\smash{\widetilde{\dataZ}_2}}^2$. On the other hand, due to the specific \pH structure, not every linear system can be written as a \pH system. This potential deviation corresponds to the second error term given by $\frobnorm{\Lambda_{+}}^2$.
\end{remark}

Before we present the proof, we first illustrate \Cref{thm:projectedApproach} with an academic toy example and need some further preliminary results that provide the best-fit in the Frobenius norm within the class of skew-symmetric and symmetric positive semi-definite matrices.

\begin{example}
	\label{ex:toyExample:projected}
	Consider $\dataZ = \begin{smallbmatrix}-1 & 2\\ \phantom{-}2 & -1/2\end{smallbmatrix}$ and $\dataT = \begin{smallbmatrix}1 & 0\\0 & 2 \end{smallbmatrix}$. 
	We immediately notice that we can choose $\svdL_1 = \svdR_1 = I_2$ and $\Sigma_1 = \dataT$. We thus obtain $\widetilde{\dataZ}_1 = \Sigma_1\svdL_1^\T\dataZ \svdR_1 = \begin{smallbmatrix}-1 & \phantom{-}2\\ \phantom{-}4& -1\end{smallbmatrix}$ and thus
	\begin{displaymath}
		\Sym(\widetilde{\dataZ}_1) = \begin{bmatrix}
			-1 & \phantom{-}3\\
			\phantom{-}3 & -1
		\end{bmatrix} \qquad\text{and}\qquad
		\Skew(\widetilde{\dataZ}_1) = \begin{bmatrix}
			0 & -1\\
			1 & \phantom{-}0
		\end{bmatrix}.
	\end{displaymath}
	Then, the ordered eigendecomposition of $\widetilde{\dataZ}_{\mathrm{sym}}$ is given by,
	\begin{displaymath}
		\widetilde{\dataZ}_{\mathrm{sym}} = \Xi\Lambda \Xi^\T = 
		\begin{bmatrix}
			\frac{\sqrt{2}}{2} & \phantom{-}\frac{\sqrt{2}}{2}\\
			\frac{\sqrt{2}}{2} & -\frac{\sqrt{2}}{2} 
		\end{bmatrix}
		\begin{bmatrix}
			2 & \phantom{-}0\\
			0 & -4
		\end{bmatrix}
		\begin{bmatrix}
			\frac{\sqrt{2}}{2} & \phantom{-}\frac{\sqrt{2}}{2}\\
			\frac{\sqrt{2}}{2} & -\frac{\sqrt{2}}{2} 
		\end{bmatrix}.
	\end{displaymath}
	Thus, the unique minimizers are given by
	\begin{equation}
		\label{eqn:exToy:minimizers}
		\dmdJJ^\star = \begin{bmatrix}
			0 & -\frac{1}{2}\\
			\frac{1}{2} & \phantom{-}0
		\end{bmatrix}\qquad\text{and}\qquad
		\dmdRR^\star = \begin{bmatrix}
			\phantom{-}2 & -1\\
			-1 & \phantom{-}\tfrac{1}{2}
		\end{bmatrix}.
	\end{equation}
	The minimal value of the optimization is given by
	\begin{displaymath}
		\frobnorm{\dataT^\T \dataZ - \dataT^\T(\dmdJJ^\star-\dmdRR^\star)\dataT} = \frobnorm{\begin{bmatrix}
			1 & 1\\
			1 & 1
		\end{bmatrix}} = 2 = \frobnorm{\Lambda_{+}}.
	\end{displaymath}
\end{example}

For the proof of \Cref{thm:projectedApproach} we need the following technical result, taken from \cite[Lem.\,7]{GilS17}, see also \cite{Hig88a}.
\begin{lemma}
	\label{lem:skewAndSpsdFit}
	Let $Z\in\R^{r\times r}$, then the minimization problem
	\begin{equation}
		\label{eqn:skewAndSpsdFit}
		\min \frobnorm{Z - (J - R)}\qquad\text{such that $J = -J^\T$ and $R\in \Spsd{r}$}.
	\end{equation}
	is solved by $J^\star\vcentcolon=\Skew(Z)$ and $R^\star\vcentcolon=\projPSD(-Z)$.
\end{lemma}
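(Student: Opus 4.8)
The plan is to exploit the orthogonal splitting of $\R^{r\times r}$ into its symmetric and skew-symmetric subspaces with respect to the Frobenius inner product. The elementary fact driving everything is that for a symmetric matrix $S$ and a skew-symmetric matrix $K$ one has $\trace(S^\T K)=\trace(SK)=\trace((SK)^\T)=-\trace(SK)=0$. Consequently the symmetric and skew parts of any matrix are Frobenius-orthogonal, giving the Pythagorean identity $\frobnorm{M}^2=\frobnorm{\Sym(M)}^2+\frobnorm{\Skew(M)}^2$ for every $M\in\R^{r\times r}$.

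First I would identify the symmetric and skew parts of the feasible argument $J-R$. Since $J=-J^\T$ and $R=R^\T$, the skew part of $J-R$ is $J$ and its symmetric part is $-R$. Applying the Pythagorean identity to $M=Z-(J-R)$ together with the linearity of $\Sym$ and $\Skew$ then yields the decoupled expression
\[
	\frobnorm{Z-(J-R)}^2 = \frobnorm{\Skew(Z)-J}^2 + \frobnorm{\Sym(Z)+R}^2,
\]
in which the first summand depends only on the skew-symmetric variable $J$ and the second only on the positive semi-definite variable $R$. The two blocks can therefore be minimized independently. The skew block is immediate: since $\Skew(Z)$ is itself skew-symmetric and feasible, the choice $J^\star=\Skew(Z)$ annihilates the first summand and is the unique minimizer. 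For the symmetric block I would rewrite $\frobnorm{\Sym(Z)+R}^2=\frobnorm{R-(-\Sym(Z))}^2$, so that minimizing over $R\in\Spsd{r}$ is precisely the projection of the symmetric matrix $-\Sym(Z)$ onto the positive semi-definite cone. By the definition of $\projPSD$ (which depends only on the symmetric part of its argument and $\Sym(-Z)=-\Sym(Z)$), the minimizer coincides with $\projPSD(-Z)$, as claimed.

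The main obstacle, and the only non-elementary step, is verifying that the eigenvalue truncation $\projPSD(-\Sym(Z))$ really is the nearest positive semi-definite matrix. I would argue by unitary invariance of the Frobenius norm: writing the ordered eigendecomposition $-\Sym(Z)=\Xi\diag(\Lambda_+,\Lambda_-,0)\Xi^\T$ and substituting $\widehat R:=\Xi^\T R\Xi$ (which again ranges over $\Spsd{r}$), the objective becomes $\frobnorm{\diag(\Lambda_+,\Lambda_-,0)-\widehat R}^2$. Discarding the off-diagonal entries of $\widehat R$ and using that its diagonal entries are non-negative because $\widehat R\succeq0$, each diagonal term $(\lambda_i-\widehat R_{ii})^2$ is bounded below by $\min_{t\ge0}(\lambda_i-t)^2$, which equals $\lambda_i^2$ exactly on the negative eigenvalues. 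This produces the lower bound $\frobnorm{\Lambda_-}^2$, attained uniquely by $\widehat R=\diag(\Lambda_+,0,0)$, i.e. by $R^\star=\projPSD(-\Sym(Z))=\projPSD(-Z)$; closedness and convexity of the cone additionally guarantee that this projection is the unique minimizer. This concluding step is exactly the content of the cited results \cite{Hig88a,GilS17}, which could alternatively be invoked verbatim in place of the argument above.
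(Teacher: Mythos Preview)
Your proof is correct and complete. The paper itself does not supply a proof of this lemma but merely cites \cite[Lem.~7]{GilS17} and \cite{Hig88a}; your argument---orthogonal splitting into symmetric and skew parts followed by the eigenvalue-truncation characterization of the projection onto $\Spsd{r}$---is exactly the standard proof contained in those references.
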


\begin{proof}[Proof of \Cref{thm:projectedApproach}]
	Let $\dataT = \svdL\Sigma \svdR^\T$ denote the singular value decomposition of $\dataT$, with partitioning 
\begin{subequations}
\label{eqn:projectedMinimization:notation}
\begin{align}
	\svdL &= \begin{bmatrix}
		\svdL_1 & \svdL_2
	\end{bmatrix}, &
	\Sigma &= \diag(\Sigma_1,0), &
	\svdR &= \begin{bmatrix}
		\svdR_1 & \svdR_2
	\end{bmatrix}
\end{align}
such that $\Sigma_1\in\GL{r}{\R}$, where $r = \rank(\dataT)$. In particular, we obtain $\dataT = \svdL_1\Sigma_1 \svdR_1^\T$. Let $\dmdJJ=-\dmdJJ^\T\in\R^{\stateDimGen\times\stateDimGen}$ and $\dmdRR\in\Spsd{\stateDimGen}$. Define
\begin{align}
	\begin{bmatrix}
		\dmdJJ_{11} & -\dmdJJ_{21}^\T\\
		\dmdJJ_{21} & \phantom{-}\dmdJJ_{22}
	\end{bmatrix} &\vcentcolon= \svdL^\T\dmdJJ \svdL, &
	\begin{bmatrix}
		\dmdRR_{11} & \dmdRR_{21}^\T\\
		\dmdRR_{21} & \dmdRR_{22}
	\end{bmatrix} &\vcentcolon= \svdL^\T\dmdRR \svdL &
	\begin{bmatrix}
		\dataZ_{11} & \dataZ_{12}\\
		\dataZ_{21} & \dataZ_{22}
	\end{bmatrix} &\vcentcolon= \svdL^\T \dataZ \svdR
\end{align}
\end{subequations}
Note that throughout the proof we work with congruence transformations of $\dmdJJ$ and $\dmdRR$, which preserve the skew-symmetry and the positive semi-definiteness.
We then obtain
\begin{align*}
	\frobnorm{\dataT^\T \dataZ - \dataT^\T (\dmdJJ-\dmdRR)\dataT}^2 &= \frobnorm{\Sigma \svdL^\T \dataZ \svdR - \Sigma \svdL^\T(\dmdJJ-\dmdRR)\svdL\Sigma}^2\\
	&= \frobnorm{\begin{bmatrix}
		\Sigma_1 \dataZ_{11} & \Sigma_1\dataZ_{12}\\
		0 & 0
	\end{bmatrix} - \begin{bmatrix}
		\Sigma_1(\dmdJJ_{11}-\dmdRR_{11})\Sigma_1 & 0\\
		0 & 0
	\end{bmatrix}}^2\\
	&= \frobnorm{\widetilde{\dataZ}_1 - \Sigma_1(\dmdJJ_{11}-\dmdRR_{11})\Sigma_1}^2 + \frobnorm{\widetilde{\dataZ}_2}^2.
\end{align*}
We immediately notice that $\dmdJJ_{21}, \dmdJJ_{22}, \dmdRR_{21}$, and $\dmdRR_{22}$ do not influence the objective function and can thus be chosen arbitrarily (provided that $\dmdJJ_{22} = -\dmdJJ_{22}^\T$ and $\dmdRR\in\Spsd{\stateDimGen}$). For our further construction we set them to zero, in agreement with~\eqref{eqn:projectedOptimization:optimalJR}. It thus suffices to minimize over all skew-symmetric matrices $\dmdJJ_{11}\in\R^{r\times r}$ and all $\dmdRR_{11}\in\Spsd{r}$.
	Thus, using \Cref{lem:skewAndSpsdFit}, we obtain
	\begin{align*}
		\min_{\substack{\dmdJJ = -\dmdJJ^\T,\\ \dmdRR\in\Spsd{\stateDimGen}}} \frobnorm{\dataT^\T \dataZ - \dataT^\T(\dmdJJ-\dmdRR)\dataT}^2 
		&= \min_{\substack{\dmdJJ_{11} = -\dmdJJ_{11}^\T,\\\dmdR_{11}\in\Spsd{r}}} \frobnorm{\widetilde{\dataZ}_1 - \Sigma_1(\dmdJJ_{11}-\dmdRR_{11})\Sigma_1}^2 + \frobnorm{\widetilde{\dataZ}_2}^2\\
		&= \min_{\dmdRR_{11}\in\Spsd{r}} \frobnorm{\widetilde{\dataZ}_1 -\Sigma_1(\dmdJJ_{11}^\star - \dmdRR_{11})\Sigma_1}^2 + \frobnorm{\widetilde{\dataZ}_2}^2\\
		&= \min_{\dmdRR_{11}\in\Spsd{r}} \frobnorm{\Sym(\widetilde{\dataZ}_1) + \Sigma_1\dmdRR_{11}\Sigma_1}^2 + \frobnorm{\widetilde{\dataZ}_2}^2\\
		&= \frobnorm{\Sym(\widetilde{\dataZ}_1) + \Sigma_1\dmdRR_{11}^\star\Sigma_1} + \frobnorm{\widetilde{\dataZ}_2}^2 = \frobnorm{\Lambda_{+}}^2 + \frobnorm{\widetilde{\dataZ}_2}^2.
	\end{align*}
	It remains to show that $\dmdJJ^\star$ and $\dmdRR^\star$ are the minimizers with minimal norm. To this end, let $\dmdJJ=-\dmdJJ^\T\in\R^{\stateDimGen\times\stateDimGen}$ and $\dmdRR\in\Spsd{\stateDimGen}$ be further minimizers of~\eqref{eq:projOptimization}. Then
	\begin{equation*}
		\frobnorm{\dmdJJ} = \frobnorm{\svdR^\T\dmdJJ \svdR} = \frobnorm{\begin{bmatrix}
			\dmdJJ_{11} & -\dmdJJ_{12}^\T\\
			\dmdJJ_{21} & \phantom{-}\dmdJJ_{22}
		\end{bmatrix}}.
	\end{equation*}
	\Cref{lem:skewAndSpsdFit} implies $\dmdJJ_{11} = \dmdJJ_{11}^\star$, and thus
	\begin{equation*}
		\frobnorm{\dmdJJ} = \frobnorm{\begin{bmatrix}
			\dmdJJ_{11}^\star & -\dmdJJ_{12}^\T\\
			\dmdJJ_{21} & \phantom{-}\dmdJJ_{22}
		\end{bmatrix}} \geq \frobnorm{\begin{bmatrix}
			\dmdJJ_{11}^\star & 0\\
			0 & 0
		\end{bmatrix}} = \frobnorm{\svdR\dmdJJ^\star \svdR} = \frobnorm{\dmdJJ^\star}.
	\end{equation*}
	A similar argument shows $\frobnorm{\dmdRR} \geq \frobnorm{\dmdRR^\star}$, which completes the proof.
\end{proof}

\subsection{Relation between the optimization problems}
\label{subsec:relationPhDMDvsProjected}

In this subsection we discuss the relation between the original \pHDMD optimization \Cref{problem:pHDMD} and the weighted \Cref{problem:projPHDMD} discussed in the previous section. We immediately obtain the following result, which showcases that whenever the data is sufficiently rich, then the cost functional of the projected optimization problem~\eqref{eq:projOptimization} provides an upper bound for the original minimization problem~\eqref{eqn:pHDMDminimzation}.

\begin{lemma}
	\label{lem:correlation}
	For given $\dataZ,\dataT\in\R^{\stateDimGen\times\nrSnapshotsGen}$ with $\rank(\dataT)=\stateDimGen$ there exists a constant $c > 0$ such that for every $\dmdJJ,\dmdRR\in\R^{\stateDimGen\times\stateDimGen}$ we have
	\begin{equation}\label{eq:correlation}
		\frobnorm{\dataZ - (\dmdJJ - \dmdRR)\dataT} \leq c \frobnorm{\dataT^\T \dataZ - \dataT^\T (\dmdJJ - \dmdRR)\dataT}.
	\end{equation}
\end{lemma}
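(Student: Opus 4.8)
The plan is to reduce \eqref{eq:correlation} to the elementary fact that left-multiplication by $\dataT^\T$ is, under the full-rank hypothesis, an injective linear map on $\R^{\stateDimGen\times\nrSnapshotsGen}$ and is therefore bounded below; inserting $M \vcentcolon= \dataZ - (\dmdJJ-\dmdRR)\dataT$ into that lower bound then produces \eqref{eq:correlation} with a constant that depends only on $\dataT$.

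First I would note that $\rank(\dataT)=\stateDimGen$ means $\dataT$ has full row rank, so $\dataT\dataT^\T\in\Spd{\stateDimGen}$ and $\dataT^\T$ possesses the left inverse $(\dataT\dataT^\T)^{-1}\dataT = \pseudo{(\dataT^\T)}$, with operator norm $\|\pseudo{(\dataT^\T)}\|_2 = 1/\sigma_{\stateDimGen}(\dataT)$, where $\sigma_{\stateDimGen}(\dataT)>0$ denotes the smallest singular value of $\dataT$. Then for arbitrary $M\in\R^{\stateDimGen\times\nrSnapshotsGen}$ one writes $M = \pseudo{(\dataT^\T)}\,(\dataT^\T M)$ and uses the submultiplicativity estimate $\frobnorm{AB}\le\|A\|_2\frobnorm{B}$ to obtain
\begin{equation*}
	\frobnorm{M} \;\le\; \frac{1}{\sigma_{\stateDimGen}(\dataT)}\,\frobnorm{\dataT^\T M}.
\end{equation*}
Alternatively, this can be argued columnwise: each column $m$ of $M$ satisfies $\|\dataT^\T m\|_2^2 = m^\T\dataT\dataT^\T m \ge \sigma_{\stateDimGen}(\dataT)^2\|m\|_2^2$ since $\dataT\dataT^\T \succeq \sigma_{\stateDimGen}(\dataT)^2 I_{\stateDimGen}$, and summing the squares over all columns yields the same Frobenius-norm bound.

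Applying this with $M = \dataZ - (\dmdJJ-\dmdRR)\dataT$, for which $\dataT^\T M = \dataT^\T\dataZ - \dataT^\T(\dmdJJ-\dmdRR)\dataT$, yields \eqref{eq:correlation} with $c \vcentcolon= 1/\sigma_{\stateDimGen}(\dataT)$. Since $c$ involves only $\dataT$, the bound is uniform over all $\dmdJJ,\dmdRR\in\R^{\stateDimGen\times\stateDimGen}$, as claimed. There is no substantial obstacle in this argument; the only point worth emphasizing is that the hypothesis $\rank(\dataT)=\stateDimGen$ is exactly what makes $M\mapsto \dataT^\T M$ injective — dropping it would allow a component of $\dataZ$ lying in $\Kern(\dataT^\T)$ to annihilate the right-hand side of \eqref{eq:correlation} while the left-hand side stays positive, so that no finite $c$ could exist.
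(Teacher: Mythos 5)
Your argument is correct and is essentially the paper's own proof: both write $M=\pseudo{(\dataT^\T)}\,\dataT^\T M$ using the left inverse guaranteed by $\rank(\dataT)=\stateDimGen$ and then apply submultiplicativity. The only (harmless) difference is the constant: you use $\frobnorm{AB}\le\|A\|_2\frobnorm{B}$ to get $c=1/\sigma_{\stateDimGen}(\dataT)$, which is slightly sharper than the paper's $c=\frobnorm{\smash{\pseudo{\dataT}}}$ obtained from $\frobnorm{AB}\le\frobnorm{A}\,\frobnorm{B}$.
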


\begin{proof}
	Let $\dataT = \svdL\Sigma \svdR^\T$ denote the singular value decomposition of $\dataT$. We then have $\pseudo{\dataT} = \svdR\pseudo{\Sigma}\svdL^\T$. Define  $c\vcentcolon= \frobnorm{\smash{\pseudo{\dataT}}} = \frobnorm{\smash{\pseudo{\Sigma}}}$. Using $\rank{\dataT}=\stateDimGen$, we conclude $\pseudo{\Sigma}\Sigma = I_{\stateDimGen}$. Let $\dmdJJ,\dmdRR\in\R^{\stateDimGen\times\stateDimGen}$. Then
	\begin{align*}
		\frobnorm{\dataZ - (\dmdJJ - \dmdRR)\dataT} 
		&= \frobnorm{\svdL\svdL^\T (\dataZ - (\dmdJJ - \dmdRR)\dataT)} 
		= \frobnorm{\pseudo{(\dataT^\T)} \dataT^\T (\dataZ - (\dmdJJ - \dmdRR)\dataT)}\\
		&\leq \frobnorm{\pseudo{(\dataT^\T)}} \frobnorm{\dataT^\T \dataZ - \dataT^\T (\dmdJJ - \dmdRR)\dataT},
	\end{align*}
	which completes the proof.
\end{proof}

If $\dataT$ has not full row rank, i.e., $\rank(\dataT)<\stateDimGen$, then we cannot expect to obtain a similar result, in particular if we use the minimum norm-minimizers from \Cref{thm:projectedApproach}. The main reason for this behavior is that in this case, using the notation as in~\eqref{eqn:projectedMinimization:notation}, we project out the data corresponding to $\dataZ_{21}$ and $\dataZ_{22}$. While the latter corresponds, similarly as $\dataZ_{12}$ to too much data, the contribution $\dataZ_{21}$ corresponds to the components $\dmdJJ_{21}-\dmdRR_{21}$, which are set zero in \Cref{thm:projectedApproach}. In more detail, using the notation as in~\eqref{eqn:projectedMinimization:notation}, we obtain for $\dmdJJ = -\dmdJJ^\T\in\R^{\stateDimGen\times\stateDimGen}$ and $\dmdRR\in\Spsd{\stateDimGen}$
\begin{subequations}
	\label{eqn:phDMD:reformulation}
\begin{align*}
	\frobnorm{\dataZ-(\dmdJJ-\dmdRR)\dataT} &= \frobnorm{\begin{bmatrix}
		\svdL_1^\T\\
		\svdL_2^\T
	\end{bmatrix}\dataZ\begin{bmatrix}
		\svdR_1 & \svdR_2
	\end{bmatrix} - \begin{bmatrix}
		\svdL_1^\T\\
		\svdL_2^\T
	\end{bmatrix}(\dmdJJ-\dmdRR)\begin{bmatrix}
		\svdL_1 & \svdL_2
	\end{bmatrix}\begin{bmatrix}
		\Sigma_1 & 0\\
		0 & 0
	\end{bmatrix}}\\
	&= \frobnorm{\begin{bmatrix}
		\dataZ_{11} & \dataZ_{12}\\
		\dataZ_{21} & \dataZ_{22}
	\end{bmatrix} - \begin{bmatrix}
		\dmdJJ_{11}-\dmdRR_{11} & -\dmdJJ_{21}^\T-\dmdRR_{12}^\T\\
		\dmdJJ_{21}-\dmdRR_{21} & \phantom{-}\dmdJJ_{22}-\dmdRR_{22}
	\end{bmatrix}\begin{bmatrix}
		\Sigma_1 & 0\\
		0 & 0
	\end{bmatrix}}\\
	&= \frobnorm{\begin{bmatrix}
		\dataZ_{11} - (\dmdJJ_{11}-\dmdRR_{11})\Sigma_1 & \dataZ_{12}\\
		\dataZ_{21} - (\dmdJJ_{21}-\dmdRR_{21})\Sigma_1 & \dataZ_{22}
	\end{bmatrix}}.
\end{align*}
\end{subequations}
Thus, whenever we find $\dmdJJ_{21},\dmdRR_{21}$ such that $\dataZ_{21}\Sigma_1^{-1} = \dmdJJ_{21}-\dmdRR_{21}$ we only have to ensure that $\dmdJJ$ remains skew-symmetric and $\dmdRR$ remains symmetric positive semi-definite. A simple way to achieve this is via the choice
\begin{equation}
	\label{eqn:pHDMDoptimzation:21component}
	\dmdJJ_{21}^\star \vcentcolon= \dataZ_{21}\Sigma_1^{-1}\qquad\text{and}\qquad \dmdRR_{21}^\star = 0.
\end{equation}
In view of \Cref{alg:pHDMDIteration}, where we only require an initialization for $\dmdRR$, we can directly use the result of \Cref{thm:projectedApproach}. Nevertheless, in our numerical experiments, we observe that the initialization with $\dmdJJ^\star$ and $\dmdRR^\star$ with the modification from~\eqref{eqn:pHDMDoptimzation:21component} already yields promising results such that they can be used even without a further application of \Cref{alg:pHDMDIteration}. However, in general these are not the optimal choices as the next example illustrates.

\begin{example}
	\label{ex:toyExample:projected:counterexample}
	Consider again \Cref{ex:toyExample:projected} with the minimizers of the weighted problem $\dmdJJ^\star$ and $\dmdRR^\star$ as presented in~\eqref{eqn:exToy:minimizers}.
	We obtain $\frobnorm{\dataZ - (\dmdJJ^\star-\dmdRR^\star)\dataT} = \sqrt{\frac{5}{2}}$ for the original \pHDMD minimization problem. Nevertheless, for
	\begin{displaymath}
		\dmdJJ^{(1)} = 
	\begin{bsmallmatrix}
		0 & -\frac{1}{5}\\
		\frac{1}{5} & \phantom{-}0
	\end{bsmallmatrix}
	\end{displaymath}
	we obtain $\frobnorm{\dataZ - (\dmdJJ^{(1)}-\dmdRR^\star)\dataT} = \sqrt{\frac{200}{105}} < \sqrt{\frac{5}{2}}$, detailing that $\dmdJJ^\star$ is not the optimal choice.
\end{example}

\section{Numerical experiments}
\label{sec:numerics}
In this section, we demonstrate the theoretical discussion on two exemplary port-Hamiltonian systems. The first one is a Mass-Spring-Damper system taken from \cite{GugPBV12} and the second one is a linear poroelastic network model; for details see~\cite{AltMU21}.
For our numerical experiments, we stop \Cref{alg:pHDMDIteration} whenever
\begin{displaymath}
	\tfrac{\frobnorm{\dmdJJ^{(k + 1)} - \dmdJJ^{(k)}}}{\frobnorm{\dmdJJ^{(k + 1)}}} + \tfrac{\frobnorm{\dmdRR^{(k + 1)} - \dmdRR^{(k)}}}{\frobnorm{\dmdRR^{(k + 1)}}} \leq \varepsilon
\end{displaymath}
with prescribed tolerance $\varepsilon$. In our experiments, we use $\varepsilon\vcentcolon=1\mathrm{e-}10$. To report the progress during our iterative method, we introduce the relative values of the cost functional of \Cref{problem:pHDMD} and \Cref{problem:projPHDMD}, which we denote by
\begin{displaymath}
	f(\dmdJJ^{(k)},\dmdRR^{(k)}) = \tfrac{\frobnorm{\dataZ - (\dmdJJ^{(k)} - \dmdRR^{(k)})\dataT}}
	{\frobnorm{\dataZ}} \quad\text{and}\quad
	f_\dataT(\dmdJJ^{(k)}, \dmdRR^{(k)}) = \tfrac{\frobnorm{\dataT^\T \dataZ - \dataT^\T (\dmdJJ^{(k)} - \dmdRR^{(k)})\dataT}}
	{\frobnorm{\dataT^\T \dataZ}},
\end{displaymath}
respectively. Moreover, since we are working with academic toy examples, we report the $\mathcal{H}_2$ and $\mathcal{H}_\infty$ errors for the identified systems, defined as
\begin{displaymath}
	\norm{\mathcal{G}}_{\calH_2} = \left(\frac{1}{2\pi}\int_{-\infty}^{\infty} \frobnorm{\mathcal{G}(\imath\omega)}^2 \mathrm{d}\omega\right)^{\frac{1}{2}} \quad\text{and}\quad \norm{\mathcal{G}}_{\calH_\infty} = \sup_{\omega\in\R} \frobnorm{\mathcal{G}(\imath \omega)},
\end{displaymath}
with $\mathcal{G}$ is the transfer function of the error system. 
Note that these errors require access to the original linear time-invariant system, which of course is not available in practical applications.

\begin{remark}
	To have a fair comparison of our method with \OI, we use also the implicit midpoint information as defined in \Cref{problem:pHDMD} for \OI in all our numerical examples.
\end{remark}

\vspace{0.2cm}
\noindent\fbox{%
    \parbox{0.98\textwidth}{%
        The code and data used to generate the subsequent results are accessible via
		\begin{center}
			\href{http://doi.org/10.5281/zenodo.6497497}{doi:10.5281/zenodo.6497497}
		\end{center}
		under MIT Common License.
    }%
}
\vspace{0.2cm}

\subsection{SISO Mass-Spring-Damper system}\label{sec:msd}

\begin{figure}[htb]
	\begin{tikzpicture}
	\tikzstyle{spring}=[thick,decorate,decoration={zigzag,pre length=0.3cm,post
	length=0.3cm,segment length=6}]
	
	\tikzstyle{damper}=[thick,decoration={markings,  
	  mark connection node=dmp,
	  mark=at position 0.5 with 
	  {
		\node (dmp) [thick,inner sep=0pt,transform shape,rotate=-90,minimum
	width=15pt,minimum height=3pt,draw=none] {};
		\draw [thick] ($(dmp.north east)+(2pt,0)$) -- (dmp.south east) -- (dmp.south
	west) -- ($(dmp.north west)+(2pt,0)$);
		\draw [thick] ($(dmp.north)+(0,-5pt)$) -- ($(dmp.north)+(0,5pt)$);
	  }
	}, decorate]
	
	\tikzstyle{ground}=[fill,pattern=north east lines,draw=none,minimum
	width=0.75cm,minimum height=0.3cm]

	\tikzstyle{wall}=[fill,pattern=north east lines,draw=none,minimum
	width=0.3cm,minimum height=0.7cm]
	
	\node[draw,outer sep=0pt,thick] (M1) [minimum width=1.5cm, minimum height=1.5cm] {$m_1$};
	\node[draw,outer sep=0pt,thick] (M2) at (3,0) [minimum width=1.5cm, minimum height=1.5cm] {$m_2$};
	\node[draw,outer sep=0pt,thick] (M3) at (6.5,0) [minimum width=1.5cm, minimum height=1.5cm] {$m_{\frac{n}{2}}$};
	\node[left of=M3] {$\scriptstyle\cdots$};
	
	\node[wall, minimum height=1.5cm] (support) at (9,0) {};
	\draw (support.north west) -- (support.south west);
	
	\draw[spring] ($(M1.east) - (0,0.5)$) -- ($(M2.west) - (0,0.5)$) 
	node [midway,above] {$k_1$};
	\draw[damper] ($(M1.east) + (0,0.5)$) -- ($(M2.west) + (0,0.5)$)
	node [midway,above, yshift=5] {$c_1$};
	\draw[spring] ($(M2.east) - (0,0.5)$) -- ($(M3.west) - (0.5,0.5)$) 
	node [midway,above] {$k_2$};
	\draw[damper] ($(M2.east) + (0,0.5)$) -- ($(M3.west) + (-0.5,0.5)$)
	node [midway,above, yshift=5] {$c_2$};
	\draw[spring] ($(M3.east) - (0,0.5)$) -- ($(support.west) - (0.0,0.5)$) 
	node [midway,above] {$k_{\frac{n}{2}}$};
	\draw[damper] ($(M3.east) + (0,0.5)$) -- ($(support.west) + (-0.0,0.5)$)
	node [midway,above, yshift=5] {$c_{\frac{n}{2}}$};
	
	\draw[-latex] ($(M1.west) - (1,0)$) -- ($(M1.west)$) node [midway, above] {$u$};
	
	\end{tikzpicture}
	\caption{Illustration of the Mass-Spring-Damper system.}
	\label{fig:msd}
\end{figure}
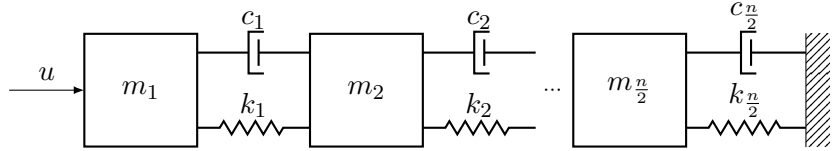
In our first experiment, we want to identify the Mass-Spring-Damper system, visualized in \Cref{fig:msd} with masses $m_i$, spring constants $k_i$ and damping constants $c_i \geq 0$ for $i=1, \ldots,\frac{n}{2}$.
A minimal realization of the Mass-Spring-Damper system, as \pH system~\eqref{eq:pHSystem2} for the order $n=6$, which corresponds to three masses, three springs and three dampers is given by 
\begin{gather*}
	\pHH =
	\begin{smallbmatrix}
		k_1 &  0 & -k_1 &  0 &  0 &  0\\
		0 &  \frac{1}{m_1} &  0 &  0 &  0 &  0\\
	   -k_1 &  0 &  k_1 + k_2 &  0 & -k_2 &  0\\
		0 &  0 &  0 &  \frac{1}{m_2} &  0 &  0\\
		0 &  0 & -k_2 &  0 &  k_2+k_3 &  0\\
		0 &  0 &  0 &  0 &  0 & \frac{1}{m_3}
	  \end{smallbmatrix}, \qquad
	\pHJ=
\begin{smallbmatrix}
  0  &  \frac{k_1}{m_1}  &  0  & -\frac{k_1}{m_2}  &  0  &  0 \\
 -\frac{k_1}{m_1}  &  0  &  \frac{k_1}{m_1}  &  0  &  0  &  0 \\
  0  & -\frac{k_1}{m_1}  &  0  &  \frac{k_1+k_2}{m_2}  &  0  & -\frac{k_2}{m_3} \\
  \frac{k_1}{m_2}  &  0  & -\frac{k_1+k_2}{m_2}  &  0  &  \frac{k_2}{m_2}  &  0 \\
  0  &  0  &  0  & -\frac{k_2}{m_2}  &  0  &  \frac{k_2+k_3}{m_3} \\
  0  &  0  &  \frac{k_2}{m_3}  &  0  & -\frac{k_2+k_3}{m_3}  &  0 
\end{smallbmatrix},\\
\pHR= \diag(0,\tfrac{c_1}{m_1^2},0,\tfrac{c_2}{m_2^2},0,\tfrac{c_3}{m_3^2}), \qquad
\pHG^\T = \bmat{
		0 & \frac{1}{m_1} & 0 & 0 & 0 & 0 \\ 
		},
\end{gather*}
and $P = 0$, $S = 0$, $N = 0$. The parameter of the masses, springs and damper are chosen as $m_i=4$, $k_i=4$, and $c_i=1$ for $i=1,\ldots,3$.
The generation of the training data $(\state_i, \inpVar_i, \outVar_i)$ for $i=0,1,\ldots,\nrSnapshots$ takes place via the implicit midpoint rule.
We collect $\nrSnapshots=100$ 
snapshots by running the simulation for 4\,\si{\second}, i.e., we set $\timeStep = \tfrac{1}{25}$.
A suitable training input is $\inpVar(t)= \exp(-\tfrac{t}{2}) \sin(t^{2})$, since it has increasing frequency to excite the model.
\begin{figure}[htb]
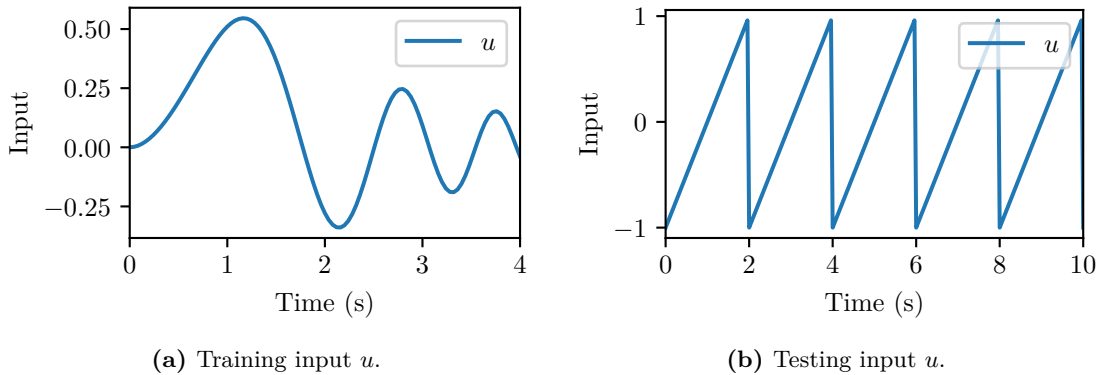

	\centering
	\begin{subfigure}{0.495\textwidth}
		\centering
		\input{figures/SISO_MSD_training_input.pgf}
		\caption{Training input $u$.}
		\label{fig:u:train}
	\end{subfigure}
	\hfill
	\begin{subfigure}{0.495\textwidth}
		\centering
		\input{figures/SISO_MSD_testing_input.pgf}
		\caption{Testing input $u$.}
		\label{fig:u:test}
	\end{subfigure}
    \caption{Training and testing input for the SISO Mass-Spring-Damper system.}
	\label{fig:u}
\end{figure}
With the collected data in place, \Cref{alg:pHDMDIteration} with $\stateDimRed = \stateDim$ and $\podV = I_{\stateDim}$ is performed using the initialization from \Cref{thm:projectedApproach}. The algorithm terminates after the first iteration. The evolution of the optimization is shown in \Cref{tb:msd_n_6_evolution}, where the values for $f_\dataT$, $f$, and the relative $\Htwo$ and relative $\Hinf$ errors are displayed in each iteration.
\begin{table}[htb]
	\centering
	\caption{Evolution of \Cref{alg:pHDMDIteration} applied for the SISO Mass-Spring-Damper system in terms of $f_\dataT$, $f$, and the relative $\Htwo$ and $\Hinf$ error over the iterations.}\label{tb:msd_n_6_evolution}
	\begin{tabular}{lcccc}
	\toprule 
	& $f_\dataT$
	& $f$
	& $\Htwo$ & $\Hinf$\\
	\midrule
	$\dmdJJ^{(0)}, \dmdRR^{(0)}$ & \num{2.61e-15} & \num{1.66e-14} & \num{2.54e-08} & \num{1.73e-11}\\
	$\dmdJJ^{(1)}, \dmdRR^{(1)}$ & \num{2.73e-14} & \num{1.38e-14} & \num{2.89e-08} & \num{1.74e-11}\\
	\bottomrule
\end{tabular}

\end{table}
We emphasize that the initialization yields a fairly good approximation of the minimizer, which is only slightly improved with \Cref{alg:pHDMDIteration}. Moreover, since in this scenario the rank of $\dataT=\stateDimGen$, according to \Cref{lem:correlation} the optimal value of \Cref{problem:projPHDMD} provides an upper bound to \Cref{problem:pHDMD},
\begin{align}
	\frobnorm{\dataZ - (\dmdJJ^{(0)} - \dmdRR^{(0)}) \dataT} \le c \frobnorm{\dataT \dataZ - \dataT (\dmdJJ^{(0)} - \dmdRR^{(0)}) \dataT} = \num{1.31e-11}
\end{align}
with $c = \num{2.49e+03}$.
The original and identified models are simulated for the testing input, shown in \Cref{fig:u:test}, for~10\,\si{\second}, i.e., on a longer time horizon than during the training.
We compare the result of our method with the result of the standard \DMD approach, where \DMD identifies a discrete-time \LTI system.
The resulting outputs $\widetilde{y}_\mathrm{DMD}$ resp. $\widetilde{y}_\mathrm{pHDMD}$ and their absolute error to the original output trajectory $y$ are presented in \Cref{fig:msd_test}.
\begin{figure}[htb]
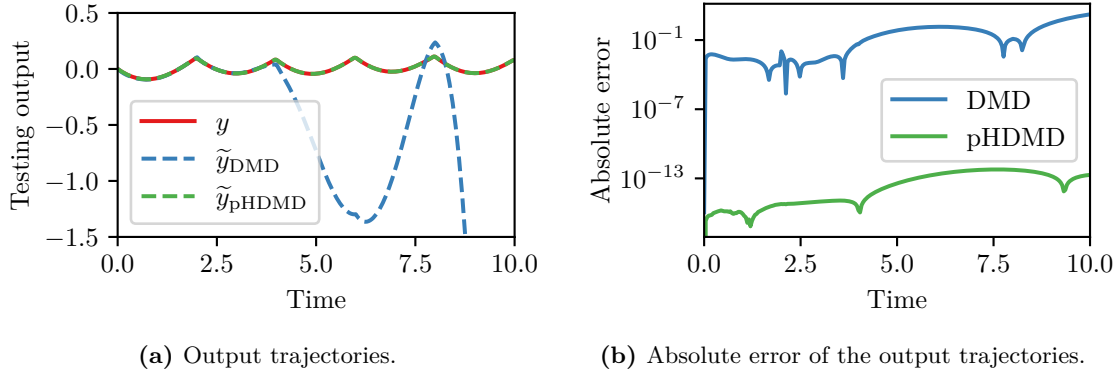

	\centering
	\begin{subfigure}{0.49\textwidth}
		\centering
		\input{figures/SISO_MSD_testing_output_with_ylims.pgf}
		\caption{Output trajectories.}
	\end{subfigure}
	\hfill
	\begin{subfigure}{0.49\textwidth}
		\centering
		\input{figures/SISO_MSD_abs_error.pgf}
		\caption{Absolute error of the output trajectories.}
	\end{subfigure}
    \caption{Output trajectories of the original and identified systems and the corresponding absolute error for the SISO Mass-Spring-Damper system. ($\delta_t=\num{4.00e-02}$)}
	\label{fig:msd_test}
\end{figure}
We notice that \DMD identifies an unstable system with five (out of 6) unstable eigenvalues. This can not happen with our method, which guarantees that the identified system is stable. However, if we decrease the stepsize, e.g., to $\delta_t=\num{1e-4}$, yielding a total of $\nrSnapshots=40000$ data points, \DMD is also able to identify a stable system. Nevertheless, the error of \DMD is still significantly larger than \pHDMD; see \Cref{fig:SISO_MSD_small_delta_abs_error}. Let us emphasize that the results of \pHDMD are, at least in parts, attributed to the generation of the data with the implicit midpoint rule. If we use a different time discretization scheme instead, for instance, the Runge-Kutta 45 (RK45) method, then the approximation quality of \pHDMD reduces several orders of magnitudes; see \Cref{fig:SISO_MSD_RK45_abs_error.pgf}. The reason for this behavior is that the data generated by RK45 does not satisfy the discrete dissipation inequality~\eqref{eq:discretePassivity}. As before, applying \DMD to data generated by evaluating the RK45 solution on a grid with the larger step size $\delta_t=\num{4.00e-02}$ yields an unstable system (cf.~\Cref{fig:SISO_MSD_RK45_abs_error.pgf}). If we use RK45 with the small step size, then \DMD produces a slightly better approximation than \pHDMD.

\begin{figure}[htb]
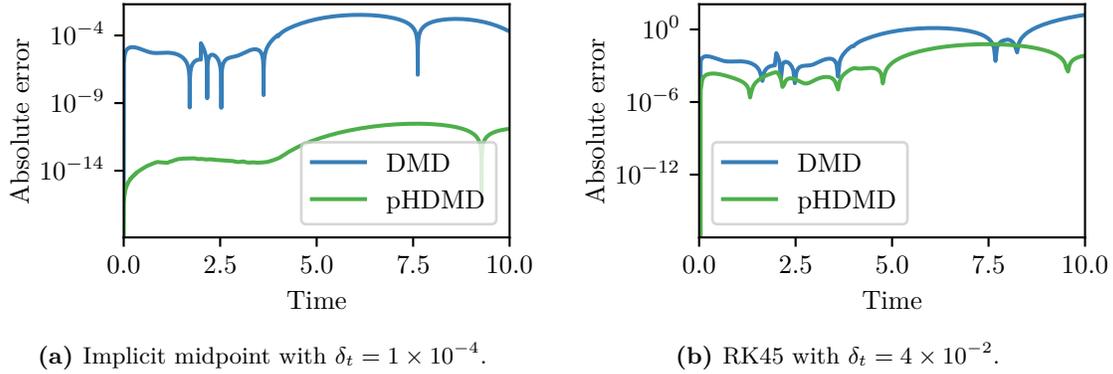

	\centering
	\begin{subfigure}{0.49\textwidth}
		\centering
		\input{figures/SISO_MSD_small_delta_abs_error.pgf}
		\caption{Implicit midpoint with $\delta_t = \num{1e-4}$.}
		\label{fig:SISO_MSD_small_delta_abs_error}
	\end{subfigure}
	\hfill
	\begin{subfigure}{0.49\textwidth}
		\centering
		\input{figures/SISO_MSD_RK45_abs_error.pgf}
		\caption{RK45 with $\delta_t = \num{4e-2}$.}
		\label{fig:SISO_MSD_RK45_abs_error.pgf}
	\end{subfigure}
    	\caption{Absolute error for \pHDMD and \DMD using data generated by the implicit midpoint rule and RK45.}
	\label{fig:label}
\end{figure}
\subsection{SISO Mass-Spring-Damper system with noisy data}
\label{sec:msd:noisy}
We repeat the experiment from \Cref{sec:msd} but add noise to the data. In particular, we add Gaussian noise with a standard deviation $s=\num{1e-4}$ to the training data ($x_i$, $y_i$) and compare the results of our method with the results of \OI, which also identifies a continuous time \LTI system. The comparison is displayed in \Cref{fig:siso-msd-noisy}.
\begin{figure}[htb]
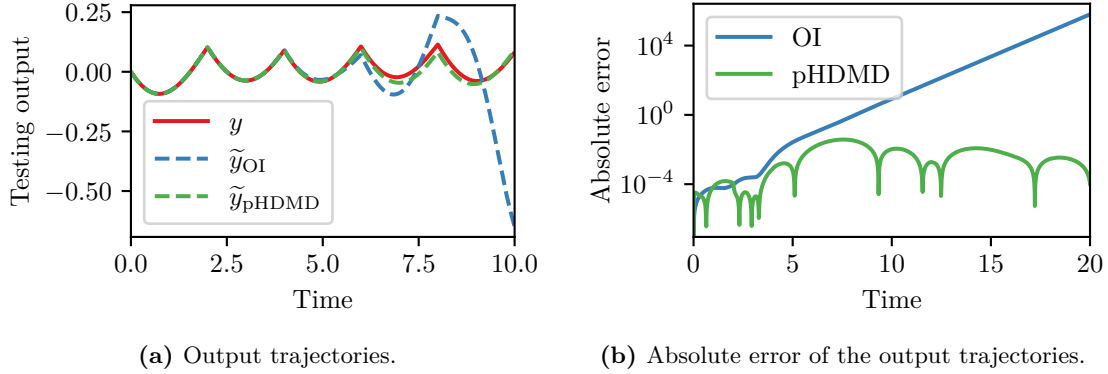

	\centering
	\begin{subfigure}{0.49\textwidth}
		\centering
		\input{figures/SISO_MSD_noisy_testing_output.pgf}
		\caption{Output trajectories.}
	\end{subfigure}
	\hfill
	\begin{subfigure}{0.49\textwidth}
		\centering
		\input{figures/SISO_MSD_noisy_long_test_abs_error.pgf}
		\caption{Absolute error of the output trajectories.}
	\end{subfigure}
	\caption{Output trajectories of the original and identified systems and the corresponding absolute error for the SISO Mass-Spring-Damper system with noisy data (standard deviation $s$ = \num{1e-4}).}
	\label{fig:siso-msd-noisy}
\end{figure}
We observe that the error of \OI increases over time while the error of \pHDMD remains small. This is again because \OI identifies an unstable system. In contrast, our method guarantees that the identified system is stable. We want to emphasize that the same argument holds for passivity

\subsection{MIMO Mass-Spring-Damper system}
In this subsection, we apply the proposed dimensionality reduction to the Mass-Spring-Damper system with increased order $\stateDim=100$. 
We also add a second input $\inpVar_2$ (and consequently also a second output), which is applied to $m_2$ the same way as $\inpVar_1$ to $m_1$ in \Cref{fig:msd}. We decrease $\delta_t$ to generate sufficiently rich data.
As training input we use $\inpVar_1(t) = \exp(-\tfrac{t}{2}) \sin(t^{2})$ and $\inpVar_2(t) = \exp(-\tfrac{t}{2}) \cos(t^{2})$.
In~\Cref{fig:mimo-msd:reduction}, we compare our method with reduced \OI and \emph{proper orthogonal decomposition} (\POD) by plotting the relative~$\Htwo$ error for different values of $\stateDimRed$. Moreover, we perturb the data by adding Gaussian noise with standard deviation $s$.
\begin{figure}[htb]
	\centering
	\input{figures/MIMO_MSD_h_norms_h2.pgf}
	\caption{$\Htwo$-norm of the error system over reduced orders $\stateDimRed$ for different noise levels with standard deviation $s$.}
	\label{fig:mimo-msd:reduction}
\end{figure}
We observe that the $\Htwo$ error of the non-intrusive reduced models is fairly close to the $\Htwo$ error of the reduced \POD model for both \pHDMD and \OI whenever the noise is sufficiently small.

\subsection{MIMO port-Hamiltonian Poroelastic Network Model}
In our final experiment, we apply our algorithm to a linear poroelasticity problem and use the \pH formulation discussed in~\cite{AltMU21}, see also \cite{BreU22}.
The model has an order of $n=980$, with two inputs and two outputs ($m=2$).
We generate $\nrSnapshots=10000$ snapshots with the same settings as before and obtain $\rank(\dataT) = 100 < \stateDimGen = 982$. Hence, we cannot expect to identify the complete system dynamics. Nevertheless, \Cref{alg:pHDMDIteration} yields matrices $\dmdJJ^\star$, $\dmdRR^\star$ with $f(\dmdJJ^\star,\dmdRR^\star) = \num{7.11e-09}$, i.e., \Cref{problem:pHDMD} is solved fairly accurately. To verify the approximation quality, we simulate the identified system with the testing input~\Cref{fig:u:test}, then we approximately recover the true output as reported in~\Cref{fig:poro:y} for the first 0.1\,\si{\second} with the corresponding absolute error displayed in~\Cref{fig:poro:e}.
\begin{figure}[htb]
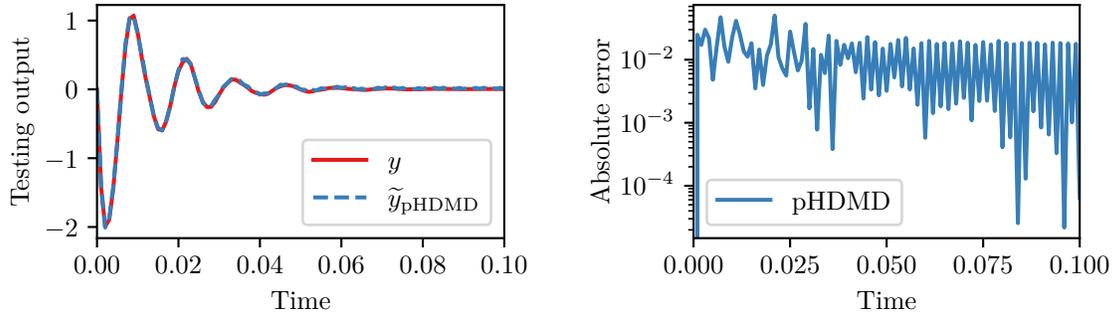

	\centering
	\begin{subfigure}{0.49\textwidth}
		\centering
		\input{figures/PORO_small_T_test_testing_output.pgf}
		\caption{Time-discretized output $y_1$ for the poroelastic network model.}
		\label{fig:poro:y}
	\end{subfigure}
	\hfill
	\begin{subfigure}{0.49\textwidth}
		\centering
		\input{figures/PORO_small_T_test_abs_error.pgf}
		\caption{Absolute error of $y_1$ for the poroelastic network model.}
		\label{fig:poro:e}
	\end{subfigure}
    \caption{Time-discretized output of the original and identified system and the corresponding absolute error for the poroelastic network model.}
	\label{fig:poro}
\end{figure}
For the full simulation until 10\,\si{\second} the identified system results in a relative $\Ltwo$ error between the output of the original system and the output of the identified system of $\num{2.95e-02}$ and a relative $\Linf$ error of $\num{2.37e-02}$. In comparison, if we use the training input, then the relative $\Ltwo$ and $\Linf$ errors are \num{5.39e-08} and \num{3.47e-07}, respectively.

\section{Conclusions}
We have developed a physics-informed system identification and dimensionality reduction algorithm that uses time-domain samples of the input, state, and output to infer a linear continuous-time dynamical system. Our algorithm is physics-informed in the sense that for a prescribed energy functional, the identified system is guaranteed to satisfy a dissipation inequality along any solution of the system. Hence, the identified system is guaranteed to be stable and passive independently of the data used for the identification. To achieve this goal, we present a generalization of dynamic mode decomposition and operator inference to port-Hamiltonian systems. The resulting method is an iterative algorithm based on a fast gradient method. For the initialization, we study a weighted problem, where we use the dominant information of the data as a weighting factor. For the weighted problem, we derive the analytical solution and detail in the numerical examples that the initialization is close to the optimum.

\subsection*{Acknowledgments}
The work of R.~Morandin is funded by the Deutsche Forschungsgemeinschaft (DFG) within the CRC/Transregio 154 \emph{Mathematical Modelling, Simulation and Optimization using the Example of Gas Networks} and the Werner-Von-Siemens Centre for Industry and Science within the project \emph{Maintenance, Repair \& Overhaul}.
J.~Nicodemus and B.~Unger acknowledge funding from the DFG under Germany's Excellence Strategy -- EXC 2075 -- 390740016 and are thankful for support by the Stuttgart Center for Simulation Science (SimTech). The authors like to thank the anonymous referees for valuable comments that significantly improved the manuscript.

\bibliographystyle{plain-doi}
\bibliography{literature}    

\end{document}